\DeclareFontFamily{U}{wncy}{}
\DeclareFontShape{U}{wncy}{m}{n}{<->wncyr10}{}
\DeclareSymbolFont{mcy}{U}{wncy}{m}{n}
\DeclareMathSymbol{\Sh}{\mathord}{mcy}{"58}
\newcommand{\lam}{\lambda}
\newcommand{\vep}{\varepsilon}
\newcommand{\pref}[1]{\prettyref{#1}}
\newcommand{\Cal}[1]{\mathcal{#1}}
\newcommand{\bb}[1]{\mathbb{#1}}
\newcommand{\mfrak}[1]{\mathfrak{#1}}
\newcommand{\wbar}[1]{\overline{#1}}
\newcommand{\paren}[1]{\left( #1 \right)}
\newcommand{\abs}[1]{\left| #1 \right|}
\theoremstyle{plain}
\newtheorem{thm}{Theorem}
\newtheorem{lma}{Lemma}
\newtheorem{cond}{Condition}
\theoremstyle{definition}
\theoremstyle{remark}
\newtheorem{rmk}{Remark}
\DeclareMathOperator{\Sel}{Sel}
\DeclareMathOperator{\SelRk}{SelRk}
\title{Bounds on the Mordell--Weil rank of elliptic curves over imaginary quadratic number fields with class number 1}
\author{Erik Wallace}
\begin{document}
\maketitle

\begin{abstract}
 We generalize the lemmas of Thomas Kretschmer to arbitrary number fields, and apply them with a 2-descent argument 
 to obtain bounds for families of elliptic curves over certain imaginary quadratic number fields with class number 1.
 One such family occurs in the congruent number problem. We consider the congruent number problem over these 
 quadratic number fields, and subject to the finiteness of Sha, we show that there are infinitely many numbers 
 that are not congruent over Q but become congruent over $Q(\zeta_3)$.
\end{abstract}

\section{Introduction}

In \cite{AlvaroMax} the authors study bounds on the rank of elliptic curves
with Weierstrass equation
\[
 y^2=x^3+ax^2+bx
\]
over $\bb{Q}$, using the methods of Kretschmer \cite{Kretschmer1}.
In particular, there is a bound for the dimension of the 2-Selmer group
in terms of the number of divisors of $4b$ and $b^2-4ab$
In theorem 4.3 of \cite{AlvaroMax}, by specializing to $a=0$, the authors show that 
in order for the Selmer group to be as large as allowed by this bound, 
the primes dividing $b$ must satisfy what they call the Legendre condition, namely
\begin{cond}[Legendre Condition]
The primes $p|b$ must be congruent to $1\bmod 8$, and for any distinct pair
of primes $p,q|b$ we must have $\paren{\frac{p}{q}}=\paren{\frac{q}{p}}=1$.
\end{cond}

In the current article, we generalize Kretschmer's methods to other number fields, and apply
the new lemmas to obtain several interesting results.
While the lemmas generalizing those of Kretschmer are stated very generally, 
the theorems in this article always assume that $K$ is a quadratic imaginary field of class number 1,
for which 2 is inert, hence $K=\bb{Q}(\sqrt{D})$ where $D=-3,-11,-19,-43,-67,$ or $-163$.
The reasons for these assumptions are explained at the beginning of section~\ref{SecRankBounds}.

In contrast to the situation over $\bb{Q}$, if we start with an elliptic curve of the type
\[
 y^2=x^3+bx
\]
with $b\in \bb{Z}$, for which all of the primes dividing $b$ are inert 
then the Legendre condition can be dropped over $K$. Following \cite{AlvaroDist}, we define
the 2-Selmer rank over a number field $K$ to be
\begin{equation}\label{eq:selrk}
  \SelRk_2(E/K)=\dim_{\bb{F}_2}\Sel_2(E/K)-\dim_{\bb{F}_2}(E(K)[2]).
\end{equation}
Thus, using this terminology, we can say that it is easier to obtain families of constant 2-Selmer rank over 
a number field $K$ than it is over $\bb{Q}$, if we restrict to prime factors that are inert.
In particular we obtain the following theorem:

\begin{thm}\label{thm-Inert}
 Let $K=\bb{Q}(\sqrt{D})$ where $D=-3,-11,-19,-43,-67,$ or $-163$.
 Let $b=\pm\prod_{i}^n p_i$, where each $p_i>2$ is inert in $K$, and $p_i\neq p_j$ when $i\neq j$.
 Then for $E_b:y^2=x^3+bx$ we have
 \[
   \SelRk_2(E_b/K)=\begin{cases}
                2n+1&\text{if }b\equiv 1\bmod 8,\\
                2n&\text{if }b\equiv 3\bmod 4,\\
                2n-1&\text{if }b\equiv 5\bmod 8.
               \end{cases}
 \]
\end{thm}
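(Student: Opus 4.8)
The plan is to run a $2$-descent on $E_b$ through its rational $2$-isogeny, following Kretschmer's method but feeding in the generalized lemmas for the local solubility conditions. Write $E_b\colon y^2=x^3+bx$ and let $\phi\colon E_b\to E_b'$ be the $2$-isogeny with kernel $\ang{(0,0)}$, where $E_b'\colon y^2=x^3-4bx$, with dual $\what{\phi}$. The two descent maps send a point to its $x$- (resp.\ $X$-) coordinate in $K^*/(K^*)^2$, with $(0,0)\mapsto b$ on $E_b$ and $(0,0)\mapsto -4b$ on $E_b'$, and the associated homogeneous spaces are the quartics
\[
  C_d^{\what\phi}\colon N^2=d\,M^4+\tfrac{b}{d}\,e^4,\qquad C_{d'}^{\phi}\colon N^2=d'M^4-\tfrac{4b}{d'}e^4 .
\]
Since each $p_i$ is inert and $\pm\prod p_i$ is squarefree, $-b$ is never a square in $K$, so $E_b(K)[2]=\ang{(0,0)}$ has $\bb F_2$-dimension $1$ and $\SelRk_2(E_b/K)=\dim_{\bb F_2}\Sel_2(E_b/K)-1$. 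I would then reduce the computation of $\dim_{\bb F_2}\Sel_2$ to the two isogeny--Selmer groups via the exact sequence
\[
  0\to \tfrac{E_b'(K)[\what\phi]}{\phi E_b(K)[2]}\to \Sel^{\phi}(E_b/K)\to \Sel_2(E_b/K)\to \Sel^{\what\phi}(E_b'/K),
\]
whose first term has dimension $1$.

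Next I would fix the ambient group and discard the easy places. Because $K$ has class number $1$ and $\Cal O_K^*/(\Cal O_K^*)^2=\ang{-1}$ for every $D$ in the list, the relevant subgroup of $K^*/(K^*)^2$ is $\ang{-1,2,p_1,\dots,p_n}$, of dimension $n+2$. The unique archimedean place is complex and imposes nothing, and primes of good reduction are automatically soluble. The heart of the simplification is at the inert primes $\mfrak p_i\mid b$: over $\bb Q$ solubility of $C_d$ at $p_i$ forces $d$ to be a quadratic residue mod $p_i$, which is precisely the Legendre condition, whereas at $\mfrak p_i$ the residue field is $\bb F_{p_i^2}$ and any rational integer $d$ prime to $p_i$ lies in $\bb F_{p_i}^\times\subseteq(\bb F_{p_i^2}^\times)^2$, hence is automatically a square and $C_d$ is automatically soluble. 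Concretely, the localization to $K_{\mfrak p_i}^*/(K_{\mfrak p_i}^*)^2$ has image $\ang{[p_i]}$, which equals the local descent image, so $\mfrak p_i$ imposes no constraint. Since no other place constrains the descent, each isogeny--Selmer group is the preimage under the localization $\rho_2$ at $\mfrak p_2$ of its local image $W^{\bullet}$; as $\rho_2$ has $2$-dimensional image, $\ker\rho_2$ has dimension $n$ and $\dim_{\bb F_2}\Sel^{\bullet}=n+\dim(\,\mathrm{im}\,\rho_2\cap W^{\bullet})$. The $n$ inert primes thus supply the main term $2n$.

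It then remains to analyze the inert prime $2$. Here $F:=K_{\mfrak p_2}\cong\bb Q_2(\sqrt5)$ is the unramified quadratic extension (all six $D$ satisfy $D\equiv5\bmod 8$), $\dim_{\bb F_2}F^*/(F^*)^2=4$, and $\mathrm{im}\,\rho_2=\ang{[-1],[2]}$ is $2$-dimensional. Using the generalized Kretschmer lemmas I would compute the local images $W^{\what\phi}$ (dimension $3$) and $W^{\phi}$ (dimension $1$), which are orthogonal complements under the Hilbert symbol on $F$, and intersect them with $\mathrm{im}\,\rho_2$; the generator of $W^{\phi}$ is $[-4b]\equiv[-b]$ when $b\equiv1\bmod 4$. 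This produces the coarse splitting by $b\bmod 4$. The delicate feature is that the classes $[b]$ and $[-b]$ in $F^*/(F^*)^2$ depend only on $b\bmod 4$, so $\Sel^{\phi}$ and $\Sel^{\what\phi}$ by themselves cannot separate $b\equiv1\bmod 8$ from $b\equiv5\bmod 8$.

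The hard part will be exactly this last separation, which lives in the rightmost map of the exact sequence. I expect $\Sel_2(E_b/K)\to\Sel^{\what\phi}(E_b'/K)$ to be surjective when $b\equiv1\bmod8$, to acquire a $2$-dimensional cokernel when $b\equiv5\bmod 8$, and to drop by $1$ when $b\equiv3\bmod4$. To detect this I would pass to the full $2$-descent over the \'etale algebra $K\times L$ with $L=K(\sqrt{-b})$ and examine the local condition at the prime of $L$ above $2$ --- equivalently, evaluate the Cassels--Tate pairing controlling the cokernel --- which is where the finer invariant $b\bmod 8$ enters through the ramification of $L$ and the local norms from $L_{\mfrak p_2}$. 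Assembling the three places gives $\dim_{\bb F_2}\Sel_2(E_b/K)=2n+2,\ 2n+1,\ 2n$ according as $b\equiv1\bmod8$, $b\equiv3\bmod4$, $b\equiv5\bmod8$, and subtracting $\dim_{\bb F_2}E_b(K)[2]=1$ yields the stated values $2n+1$, $2n$, $2n-1$.
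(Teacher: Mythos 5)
Your first two paragraphs track the paper's proof closely: the same $2$-isogeny descent with $E_b'\colon y^2=x^3-4bx$, the dimension-$1$ kernel term $E_b'(K)[\what\phi]/\phi(E_b(K)[2])$, the ambient group $\ang{-1,2,p_1,\dots,p_n}$ of dimension $n+2$, and the key observation that at an inert $p_i$ the residue field is $\bb{F}_{p_i^2}$, so every rational integer prime to $p_i$ is a local square and the place $\mfrak{p}_i$ imposes no condition (this is exactly case 1 of Lemma~\ref{lmapadic}); your reduction $\dim\Sel^{\bullet}=n+\dim(\mathrm{im}\,\rho_2\cap W^{\bullet})$ is also sound. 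The genuine gap is your claim that ``$\Sel^{\phi}$ and $\Sel^{\what\phi}$ by themselves cannot separate $b\equiv1\bmod 8$ from $b\equiv5\bmod 8$,'' together with the fixed dimensions $(\dim W^{\phi},\dim W^{\what\phi})=(1,3)$. Local solvability of $C_{b_1,b_2}\colon v^2=b_1u^4+b_2w^4$ over $F=\bb{Q}_2(\zeta_3)$ is a condition on $b_1,b_2$ modulo \emph{fourth} powers, not squares: the local images $W^{\bullet}$ are subgroups of $F^*/(F^*)^2$, but they depend on the curve $E_b/F$, hence on $b$ up to $(F^*)^4$, and $5$ is a square but not a fourth power in $F$ (squares in $A_\nu^\times$ are $1,\zeta_3,\zeta_3^2,5,5\zeta_3,5\zeta_3^2\bmod 8A_\nu$, fourth powers only $1,\zeta_3,\zeta_3^2$). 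Concretely, on the $\phi$-side the $2$-torsion point $x=2\sqrt{b}$ of $E_b'(F)$ gives the class $[2\sqrt{b}]\in W^{\phi}$: for $b\equiv1\bmod 8$, $\sqrt{b}$ is a square unit, so $[2]\in W^{\phi}$ and $W^{\phi}\cap\mathrm{im}\,\rho_2\supseteq\ang{[-1],[2]}$ has dimension $2$; for $b\equiv5\bmod 8$, $\sqrt{b}\equiv1+2\zeta_3$ is a nonsquare unit (note $(1+2\zeta_3)^2=-3\equiv5\bmod 8A_\nu$) and $[\pm2]\notin W^{\phi}$. This is precisely what the paper's Lemma~\ref{lma2adic-zero} encodes via conditions mod $8A_\nu$ and $32A_\nu$, and its table in the proof of Theorem~\ref{thm-Inert} shows, e.g., that $(2b_i,-2p_i)$ is solvable at $2$ when $r=1$ but not when $r=5$, even though both cases have $b\equiv1\bmod4$.

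Consequently the entire last stage of your plan misfires: you route the $1$-versus-$5\bmod 8$ splitting through a conjectured cokernel of $\Sel_2(E_b/K)\to\Sel^{\what\phi}(E_b'/K)$, to be detected by a Cassels--Tate pairing or a full $2$-descent over $K(\sqrt{-b})$. That cokernel is $\Sh(E_b'/K)[\what\phi]/\phi_*(\Sh(E_b/K)[2])$, you never actually compute it (``I expect'' and ``I would evaluate'' are not proofs), and a $2$-dimensional cokernel for $b\equiv5\bmod8$ is inconsistent with the corrected local counts: the isogeny Selmer groups already satisfy $\dim S^{(\phi)}+\dim S^{(\what\phi)}=2n+3,\,2n+2,\,2n+1$ according as $b\equiv1\bmod8$, $b\equiv3\bmod4$, $b\equiv5\bmod8$, and the paper obtains the theorem directly from these counts and the dimension-$1$ kernel term. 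To be fair, your instinct that the rightmost arrow of the exact sequence needs attention is legitimate in principle --- the paper's ``the result follows immediately'' quietly takes that map to be surjective --- but your proposal misplaces the entire mod-$8$ dependence into that arrow, and the example above shows that is demonstrably not where it lives.
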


On the other hand, if we allow for split-primes in the factorization of $b$, then the situation
becomes much more complicated. For just one prime, if $b=\pm p$ where $p$ splits over $K$,
and if $p=\alpha_1\alpha_2$ is a factorization, then the 2-Selmer rank can also depend on the trace of $\alpha_1$.

\begin{thm}\label{thm-split}
Let $K=\bb{Q}(\sqrt{D})$ where $D=-3,-11,-19,-43,-67,$ or $-163$, and let $p$ be a prime that splits over $K$. 
For $E_{-p}:y^2=x^3-px$ and $E_p:y^2=x^3+px$ we have
 \[
  \SelRk_2(E_{-p})=\begin{cases}
		3+\paren{\frac{t}{p}}&\text{if }p\equiv 1\bmod 8\\
                2&\text{if }p\equiv 5\bmod 8\\
                1&\text{if }p\equiv 3\bmod 4
               \end{cases}
 \quad\text{and}\quad
     \SelRk_2(E_p)=\begin{cases}
                4+\paren{\frac{t}{p}}&\text{if }p\equiv 1\bmod 8\\
                2+\paren{\frac{t}{p}}&\text{if }p\equiv 5\bmod 8\\
                2&\text{if }p\equiv 3\bmod 4
               \end{cases}
 \]
where $t$ is the trace of $\alpha$ in $K$ for a prime ideal $\alpha\Cal{O}_K$ above $p$.
\end{thm}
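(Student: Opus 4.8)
The plan is to run a complete $2$-descent on each curve via its rational $2$-isogeny and to evaluate the resulting local conditions using the generalized Kretschmer lemmas. Write $E_b : y^2 = x^3 + bx$ with $b = \pm p$, let $\phi : E_b \to \hat E_b$ be the $2$-isogeny with kernel $\{O,(0,0)\}$, so that $\hat E_b : y^2 = x^3 - 4bx$, and let $\hat\phi$ be its dual. The descent maps embed $E_b(K)/\hat\phi(\hat E_b(K))$ and $\hat E_b(K)/\phi(E_b(K))$ into $K^\times/(K^\times)^2$ via the $x$-coordinate, and the associated $\phi$- and $\hat\phi$-Selmer groups are cut out by solubility, in every completion $K_v$, of the homogeneous spaces $w^2 = d\,u^4 + (b/d)\,v^4$ and $w^2 = d\,u^4 - (4b/d)\,v^4$. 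I will then invoke the standard relation $\dim_{\bb{F}_2}\Sel_2(E_b/K) = \dim_{\bb{F}_2}\Sel_\phi(E_b/K) + \dim_{\bb{F}_2}\Sel_{\hat\phi}(\hat E_b/K) - 1$, so that the theorem reduces to computing the two isogeny-Selmer dimensions.

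Next I would pin down the ambient $\bb{F}_2$-space and the $2$-torsion. Because $K$ has class number $1$, the prime $p$ splits as $p = \alpha\wbar\alpha$ with $\alpha,\wbar\alpha$ principal of residue degree $1$, while $2$ is inert with residue field $\bb{F}_4$; every unit of $\Cal{O}_K$ is, modulo squares, represented by $\pm 1$, and $-1$ is not a square in $K$. Hence both Selmer groups sit inside the rank-$3$ subgroup of $K^\times/(K^\times)^2$ generated by $-1,\alpha,\wbar\alpha$. Since $p$ split forces $\sqrt{\mp p}\notin K$ (otherwise $p\mid D$ and $p$ would ramify), one has $E_{\pm p}(K)[2]=\{O,(0,0)\}$, so $\dim_{\bb{F}_2}E(K)[2]=1$ and therefore $\SelRk_2(E_{\pm p})=\dim\Sel_\phi+\dim\Sel_{\hat\phi}-2$.

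The heart of the argument is the place-by-place analysis. The unique archimedean place of $K$ is complex, so every homogeneous space is soluble there and the archimedean place imposes no constraint; this is exactly what makes the Legendre-type sign conditions of the rational case disappear. The nontrivial conditions come only from the inert prime above $2$ and from $\alpha,\wbar\alpha$. At the inert prime, solubility of $w^2 = d\,u^4 + (b/d)\,v^4$ over the completion with residue field $\bb{F}_4$ is governed by the class of $p$ modulo $8$, which produces the three-way split $p\equiv 1,5\bmod 8$ and $p\equiv 3\bmod 4$. At $\alpha$ and $\wbar\alpha$ the conditions are Legendre symbols in the residue field $\bb{F}_p$; the only nontrivial such symbol is $\paren{\frac{\wbar\alpha}{\alpha}}$, and using $\alpha\equiv 0$, hence $\wbar\alpha\equiv\tr(\alpha)=t\pmod{\alpha}$, this equals $\paren{\frac{t}{p}}$. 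This is the single source of the $\paren{\frac{t}{p}}$ appearing in the statement.

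Finally I would assemble the counts: for each curve $E_{\pm p}$ and each residue class of $p$ modulo $8$, I tabulate which of the eight candidate classes $d\in\ang{-1,\alpha,\wbar\alpha}$ satisfy all local conditions for $\Sel_\phi$ and for $\Sel_{\hat\phi}$, read off the two dimensions, and combine them via $\SelRk_2=\dim\Sel_\phi+\dim\Sel_{\hat\phi}-2$ to recover the table. The main obstacle is the bookkeeping at the bad primes: one must correctly normalize $\alpha$ and track the interaction between the $\bmod 8$ behaviour at the inert prime and the quadratic-residue conditions at $\alpha,\wbar\alpha$ for both $b=p$ and $b=-p$ (where the dual side instead carries the factor $-4b$), and check that each resulting case is consistent with the dimension formula and with the global product-formula constraint that the local conditions must jointly satisfy.
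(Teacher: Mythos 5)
Your framework is the same as the paper's: descent by the $2$-isogeny, the count $\SelRk_2(E/K)=\dim_{\bb{F}_2}S^{(\phi)}(E/K)+\dim_{\bb{F}_2}S^{(\hat\phi)}(E'/K)-2$, local conditions only at the place above $2$ and at $\alpha,\wbar{\alpha}$ (the complex place being vacuous), and the identity $\chi_\alpha(\wbar{\alpha})=\paren{\frac{t}{p}}$, which is exactly the remark following the paper's proof. But there is a genuine gap that would make your tabulation come out wrong: you confine both Selmer groups to the rank-$3$ group $\ang{-1,\alpha,\wbar{\alpha}}$. This contradicts your own setup, in which $d$ runs over divisors of $4b$ on one side; since $2$ is inert, hence a nontrivial class modulo squares, the correct ambient group is the rank-$4$ group $\ang{-1,2,\alpha,\wbar{\alpha}}$, i.e.\ sixteen candidate classes, not eight. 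The even classes are not bookkeeping: they are precisely where the $p\bmod 8$ case split in the theorem comes from, and in several cases they are generators. From the paper's tables: for $E_{-p}$ with $p\equiv 7\bmod 8$ one has $S^{(\phi)}(E/K)=\ang{p,2}$, and for $p\equiv 3\bmod 8$ one of the pairs $2\alpha_1,2\alpha_2$ or $-2\alpha_1,-2\alpha_2$ generates alongside $p$; omitting even classes yields $\SelRk_2(E_{-p})=0$ instead of the correct value $1$. Likewise $-2\in S^{(\phi)}(E_p/K)$ when $p\equiv 3\bmod 8$ and $2\in S^{(\phi)}(E_p/K)$ when $p\equiv 7\bmod 8$. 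Moreover, for even classes such as $2\alpha_1$ the residue conditions at $\nu(\alpha_i)$ read $\paren{\frac{\pm 2t}{p}}=1$, so $\paren{\frac{-1}{p}}$ and $\paren{\frac{2}{p}}$ enter alongside $\paren{\frac{t}{p}}$; your claim that $\paren{\frac{\wbar{\alpha}}{\alpha}}$ is the only nontrivial symbol is false once the full candidate set is used.

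A second, smaller gap concerns the place above $2$. The conjugate pairs $(\pm\alpha_1,\pm\alpha_2)$, $(\pm\alpha_1,\mp 4\alpha_2)$, $(\pm 2\alpha_1,\mp 2\alpha_2)$ have $b_1,b_2\notin\bb{Z}_2$ (only their product is rational), so the simplified $\bb{Z}_2$-criteria in Lemma~\ref{lma2adic-zero} do not apply; one needs its $b_1b_2\in\bb{Z}_2$ criteria, which require determining $(x-\beta_1^3)(x-\beta_2^3)\bmod 8A_\nu$. The paper proves two auxiliary lemmas in Section~\ref{SecRankBounds} computing $(x-\alpha^3)(x\mp\wbar{\alpha}^3)\bmod 8A_\nu$ and characterizing when $\alpha$ is a local square, and only with these does solvability at $2$ reduce to congruences on $p\bmod 8$ — and even then only in aggregate: which sign pair is soluble depends on $\alpha$ itself, while the dimension count depends on $p\bmod 8$ and $\paren{\frac{t}{p}}$ alone. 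Your proposal asserts this reduction without the means to establish it. Finally, the relation $\dim\Sel_2(E/K)=\dim\Sel_\phi(E/K)+\dim\Sel_{\hat\phi}(E'/K)-1$ you invoke is in general only an upper bound, the defect being a quotient involving $\Sh$; the paper operates with the same accounting, so this does not separate you from it, but a complete write-up should justify the equality or state the result as a bound.
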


We do not state a general theorem for more than one split prime, but it can be expected that something
analogous to the Legendre condition must hold.

As an application of our methods we consider a generalization of the congruent number problem
to number fields. Over $\bb{Q}$, a number $n$ is congruent if it is the area of a right triangle
whose sides are all rational, which is to say that
\begin{equation}\label{eq:7}
 n=\frac{1}{2}ab\quad\text{and}\quad a^2+b^2=c^2
\end{equation}
where $n\in\bb{Z}$ and $a,b,c$ are nonzero rational numbers.
The elliptic curve associated with the congruent number problem is
\[
 E_n: y^2=x^3-n^2x,
\]
and it is known that $n$ is congruent if and only if $E_n$ has positive rank (see \cite{Koblitz}).
We extend this to other number fields $K$ by defining $n\in\Cal{O}_K$
to be congruent if \pref{eq:7} has a solution with $a,b,c\in K$, where $a$, $b$, and $c$ are nonzero. The algebraic construction relating this
problem to the curve $E_n$ still holds, and so we can say that if $E_n$ has positive rank over $K$, then $n$
is a congruent number over $K$. In this context, we can prove the following theorem.

\begin{thm}\label{thmCongNum}
 Let $K=\bb{Q}(\sqrt{D})$ where $D=-3,-11,-19,-43,-67,$ or $-163$. Let $n\in\bb{Z}^+$ be squarefree, and
 such that if $p$ is a prime dividing $n$, then $p$ is inert. Then, for $E_n:y^2=x^3-n^2x$ we have
 \[
 \SelRk_2(E_n)=\begin{cases}
                2k &\text{if $n$ is odd}\\
                2k-1&\text{if $n$ is even}
               \end{cases}
 \]
 where $k$ is the number of prime factors of $n$. In particular, 
 if $\Sh(E_n/K)[2^\infty]$ is finite for all such $n$, then every even squarefree number $n$
 relatively prime to $D$ and divisible only by primes that are inert in $K$ is a congruent number over $K$.
\end{thm}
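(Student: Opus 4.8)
The plan is to run a full $2$-descent on $E_n$, using that $x^3-n^2x=x(x-n)(x+n)$ splits completely over $\bb{Q}\subseteq K$, so that $E_n[2]\subseteq E_n(K)$ and $\dim_{\bb{F}_2}E_n(K)[2]=2$. Kummer theory then identifies $H^1(K,E_n[2])$ with $\paren{K^*/(K^*)^2}^2$, and the descent map sends an affine point $(x,y)$ to the pair of square classes $(x,\,x-n)$, the class of $x+n$ being forced by $x(x-n)(x+n)=y^2$. Since the discriminant of $E_n$ is $64n^6$, every Selmer class is unramified outside $S=\{v:v\mid 2n\}\cup\{\infty\}$; as $K$ has class number $1$ and $\Cal{O}_K^*/(\Cal{O}_K^*)^2=\ang{-1}$ for each of the six fields, the group of admissible square classes is $V=\ang{-1}\oplus\bigoplus_{p\mid 2n}\ang{p}$, the sum taken over the rational primes $p\mid 2n$ (all inert in $K$). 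Thus $\Sel_2(E_n/K)\subseteq V\times V$, cut out by the conditions that $(d_1,d_2)$ lie in the image of the local Kummer map $\delta_v\colon E_n(K_v)/2E_n(K_v)\hookrightarrow\paren{K_v^*/(K_v^*)^2}^2$ for the finitely many $v\in S$; the single complex place imposes nothing (its local group is trivial), and at every place of good reduction a class in $V\times V$ is automatically local.

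First I would record the local image dimensions $\dim_{\bb{F}_2}E_n(K_v)/2E_n(K_v)$, which equal $\dim_{\bb{F}_2}E_n(K_v)[2]$ for $v\nmid 2$ and $\dim_{\bb{F}_2}E_n(K_v)[2]+[K_v:\bb{Q}_2]$ for $v\mid 2$: this is $0$ at the complex place, $2$ at each odd inert prime dividing $n$, and $4$ at the inert prime $2$, whose residue field is $\bb{F}_4$. These sizes by themselves are uninformative --- the Poitou--Tate formalism forces $\sum_{v\in S}\dim_{\bb{F}_2}\paren{H^1(K_v,E_n[2])/\im\delta_v}$ to equal $\dim_{\bb{F}_2}\paren{V\times V}$, so a mere count of orders collapses to a tautology. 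The content, which the generalized Kretschmer lemmas supply, is the identification of each $\im\delta_v$ as an explicit \emph{subspace}: at an odd inert prime in terms of the square classes of the uniformizer and the units, and at $2$ as the four-dimensional image over the unramified quadratic extension of $\bb{Q}_2$. It is precisely inertness that makes the Legendre-type compatibilities between distinct primes --- the very conditions that obstruct a clean answer over $\bb{Q}$ --- hold automatically here, so that the contribution of each odd inert prime is uniform.

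Next I would turn this into linear algebra over $\bb{F}_2$: with the localization map $\lam\colon V\times V\to\bigoplus_{v\in S}\paren{H^1(K_v,E_n[2])/\im\delta_v}$ one has $\Sel_2(E_n/K)=\ker\lam$, hence $\dim_{\bb{F}_2}\Sel_2(E_n/K)=2\dim_{\bb{F}_2}V-\operatorname{rank}\lam$. Each odd inert prime dividing $n$ feeds its two generators through the two descent coordinates exactly as prescribed by the lemmas, and by the uniformity just noted these contributions assemble without cross-relations. The only place the odd/even dichotomy enters is the prime $2$: its local image, and hence the rank of the corresponding block of $\lam$, depends on whether $2\mid n$, and a careful accounting there yields $\dim_{\bb{F}_2}\Sel_2(E_n/K)=2k+2$ for odd $n$ and $2k+1$ for even $n$. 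Subtracting $\dim_{\bb{F}_2}E_n(K)[2]=2$ gives $\SelRk_2(E_n)=2k$ and $2k-1$ respectively. I expect this $2$-adic computation --- pinning down the four-dimensional local image over the unramified quadratic extension of $\bb{Q}_2$ and deciding which of its coordinate conditions are independent --- to be the main obstacle, the odd inert primes being handled directly by the generalized lemmas.

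Finally, for the congruent-number statement I would argue by parity. The descent sequence $0\to E_n(K)/2E_n(K)\to\Sel_2(E_n/K)\to\Sh(E_n/K)[2]\to 0$ gives $\SelRk_2(E_n)=\operatorname{rank}E_n(K)+\dim_{\bb{F}_2}\Sh(E_n/K)[2]$. If $\Sh(E_n/K)[2^\infty]$ is finite, the Cassels--Tate pairing on $\Sh(E_n/K)$ is alternating and nondegenerate for the canonical principal polarization of $E_n$, so $\abs{\Sh(E_n/K)}$ is a square and $\dim_{\bb{F}_2}\Sh(E_n/K)[2]$ is even; hence $\SelRk_2(E_n)\equiv\operatorname{rank}E_n(K)\pmod 2$. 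For even squarefree $n$ prime to $D$ and divisible only by inert primes, the computation above gives the odd value $\SelRk_2(E_n)=2k-1$, so $\operatorname{rank}E_n(K)$ is odd, in particular positive. Thus $E_n(K)$ carries a point of infinite order, and by the algebraic correspondence between $E_n$ and the system \pref{eq:7}, such an $n$ is a congruent number over $K$.
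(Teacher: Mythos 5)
Your skeleton takes a genuinely different route from the paper: you propose a complete $2$-descent off the full rational $2$-torsion, embedding $\Sel_2(E_n/K)$ in $V\times V$ and cutting it out by local Kummer conditions, whereas the paper descends via the $2$-isogeny $\phi\colon E_n\to E'_n$ with $E'_n\colon y^2=x^3+4n^2x$, determines $S^{(\phi)}$ and $S^{(\hat\phi)}$ by testing local solvability of the quartic spaces $C_{b_1,b_2}\colon v^2=b_1u^4+b_2w^4$ through Lemmas~\ref{lmapadic} and~\ref{lma2adic-zero} (the two tables in its proof), and then uses $\dim_{\bb{F}_2}E'(K)[\hat\phi]/\phi(E(K)[2])=0$ to assemble $\SelRk_2$. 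Your framework is sound as far as it goes: the identification of the ambient group $V\times V$ under the class number $1$ hypothesis, the local image dimensions ($0$ at the complex place, $2$ at odd inert $p\mid n$, $4$ at the inert prime $2$), and the closing parity argument via Cassels--Tate (finiteness of $\Sh[2^\infty]$ forces $\dim_{\bb{F}_2}\Sh[2]$ even, so $\SelRk_2=2k-1$ odd gives positive rank) are all correct; that last step is in fact the deduction the paper leaves implicit, and your direct computation of $\Sel_2$ would also sidestep the exact-sequence bookkeeping relating the two isogeny Selmer groups to $\Sel_2$.

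However, there is a genuine gap: the quantitative heart of the theorem --- that $\dim_{\bb{F}_2}\Sel_2(E_n/K)$ equals $2k+2$ for odd $n$ and $2k+1$ for even $n$ --- is asserted (``a careful accounting there yields''), never derived. By your own correct observation, dimension counting is tautological here (each local image is maximal isotropic, so the codimensions automatically sum to $\dim_{\bb{F}_2}(V\times V)$), which means the entire content lies in identifying the local images as explicit subspaces and computing $\operatorname{rank}\lam$; you carry out neither of the two computations that decide it. First, at an odd inert $p\mid n$ you need the fact that every rational integer prime to $p$ becomes a square in $K_\nu$ (the residue field is $\bb{F}_{p^2}$), so the local condition at $p$ constrains only the valuations of the descent coordinates and no Legendre-type cross-conditions between distinct primes survive; you gesture at this uniformity but give no argument, and it is the whole reason the answer depends only on $k$. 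Second, at the place above $2$ --- precisely where the odd/even dichotomy and the difference between $2k+2$ and $2k+1$ are decided --- you give no determination of the $4$-dimensional image of $\delta_\nu$ inside the $8$-dimensional $H^1(K_\nu,E_n[2])$; this is the counterpart of the paper's Lemma~\ref{lma2adic-zero} and of its table entries (e.g.\ the failure of the relevant space at $2$ removing the class of $2$ in the even case, and the sign pattern forcing exactly one of $\pm p_i$ into one Selmer group in the odd case), and it is nontrivial enough that the paper resorts to a finite computation in $A_\nu/8A_\nu$. You even flag this $2$-adic computation as ``the main obstacle'' and then defer it, so what you have is a correct and reasonable plan, not a proof: the numbers you subtract $2$ from at the end are exactly what remained to be proved.
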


Several results are available  providing criteria for certain even square-free numbers are {\it not} congruent numbers,
including the following:
\medskip

\begin{tabular}{ll}
 Genocchi (1855)&$n=2p$ or $n=2pq$ such that $p$ and $q\equiv 5\bmod 8$ and $p\neq q$,\\
 Bastien (1913)&$n=2p$ such that $p\equiv 9\bmod 16$,\\
 Lagrange (1974)&$n=2pq$ such that $p\equiv 1\bmod 8$, $q\equiv 5\bmod 8$, and $\left(\frac{p}{q}\right)=-1$.
\end{tabular}
\medskip

Then under the assumption that $\Sh(E_n/K)[2^\infty]$ is finite, this theorem implies the existence of
infinitely many numbers $n$ that are not congruent over $\bb{Q}$ but become congruent over $K$.
For example if $K=\bb{Q}(\zeta_3)$ and $p\equiv 41\bmod 48$ or $p\equiv 5\bmod 24$, then $2p$ is not congruent over $\bb{Q}$
but would be congruent over $\bb{Q}(\zeta_3)$. Since the conditions for a prime $p$ to be 
inert over a quadratic number field can be completely described by congruences, then by the 
Chinese Remainder Theorem, and Dirichlet's theorem on primes in arithmetic progressions,
it would follow that there are infinitely many $n$ that are not congruent numbers over $\bb{Q}$,
but which become congruent over all of the fields in theorem~\ref{thmCongNum}.

In section~\ref{secLocal}, we present some basic results on square in local fields that are required to
prove the generalization of Kretschmer's lemmas to higher degree number fields, as carried out in section~\ref{secKretschmer}. 
Then in section~\ref{SecRankBounds} we prove bounds on the rank of certain elliptic curves over the number fields $K=\bb{Q}(\sqrt{D})$
where $D=-3,-11,-19,-43,-67,$ or $-163$ using descent by 2-isogeny. The lemmas in section~\ref{secKretschmer} are used
to prove the local solvability of the homogeneous spaces corresponding to an elliptic curve $E$ and its 2-isogenous curve $E'$.


\section*{Acknowledgements}
 The author would like to thank Prof. Dr. Kretschmer for scanning his Diplomarbeit, which proved to be very
helpful for generalizing his methods. The author would also like to thank \'Alvaro Lozano-Robledo and
Keith Conrad for useful discussions and feedback.

\section{Squares in Local fields}\label{secLocal}
For now let $K$ be an arbitrary number field.
Let $A_\nu$ be the ring of integers of the completion $K_\nu$ with respect
to a non-archimedian valuation $\nu$, let $\mfrak{p}$ be the maximal ideal of $A_\nu$,
and let $\pi$ be a uniformizer. Then, for all $a\in K_\nu^\times$ we have a unique representation
\[
 a=\pi^{\nu(a)}\vep
\]
where $\vep\in A_\nu^\times$. Now if $b\in (K_\nu^\times)^2$, it means that there exists $a\in K_\nu^\times$
such that 
\[
 b=a^2=\pi^{2\nu(a)}\vep^2,
\]
thus $b\in (K_\nu^\times)^2$ if and only if both of the following conditions hold:
\begin{enumerate}
 \item\label{cond:1} $\nu(b)\equiv 0\mod 2$,
 \item\label{cond:2} $b/\pi^{\nu(b)}\in (A_\nu^\times)^2$.
\end{enumerate}
It remains to determine the description of squares in $A_\nu^\times$. Let $p$ be the characteristic
of the residue field $k_\nu$. If $p$ is odd, then by Hensel's Lemma, $x^2-\vep$ has a solution
in $K_\nu^\times$ if an only if $x^2-\vep$ has a solution in $k_\nu^\times$. In the following,
we will use $\chi$ to denote the quadratic character on $K_\nu^\times$ induced by the natural map to $k_\nu^\times/(k_\nu^\times)^2$.
That is for $\alpha\in K$ such that $\nu(\alpha)=0$ we have
\[
 \chi(\alpha)=\begin{cases}
          1&\text{if $x^2-\alpha \bmod \mfrak{p}$ has a solution,}\\
          -1&\text{otherwise.}
         \end{cases}
\]
For $p=2$, Hensel's Lemma gives us
\[
 \chi(\alpha)=\begin{cases}
          1&\text{if $x^2-\alpha \bmod \mfrak{p}^{2e+1}$ has a solution,}\\
          -1&\text{otherwise.}
         \end{cases}
\]
where $e$ is the ramification index. In particular if $K$ is a quadratic number field for which 2 is inert,
then $K_\nu$ is an unramified quadratic extension of $\bb{Q}_2$, thus $K_\nu$ is generated by a root of $x^2+x+1$,
which will still be called $\zeta_3$. It is then possible to consider elements of $K_\nu$ using the basis $1,\zeta_3$ 
and thus we can say that $\alpha\in A_\nu^\times$ is a square if and only if
\begin{equation}\label{eq:3}
 \alpha\equiv 1,\zeta_3,\zeta_3^2,5,5\zeta_3,5\zeta_3^2\bmod 8A_\nu.
\end{equation}

Equivalently, we can say that $\alpha\in A_\nu^\times$ is a
square if and only if $\alpha^3$ reduces to 1 or 5 mod $8A_\nu$, and indeed if $\alpha\in\bb{Z}_2^\times$, then $\alpha$ is a 
square in $\alpha\in A_\nu^\times$ if and only if $\alpha^3$ reduces to $1\bmod 4A_\nu$.
Furthermore, we see that $\alpha$ is a fourth power if and only if it reduces to
\[
 1,\zeta_3,\zeta_3^2 \bmod(8),
\]
or equivalently if $\alpha^3$ reduces to $1\bmod 8A_\nu$.
Note that $-1$ is not a square in $K_\nu$ since $K_\nu$ is an unramified extension of $\bb{Q}_2$,
whereas $\bb{Q}_2(\sqrt{-1})$ is a ramified extension.

We now prove several lemmas on character sums, which can
be combined with Hensel's Lemma to show that $f(x)$ is a non-zero square in
$A^\times$ for some $x$, where $f(x)=cx^2+d$ or $f(x)=cx^4+d$.

\begin{lma}\label{lma1}
 Let $K$ be a number field, let $\nu$ be a non-archimedian valuation such that $\nu(cd)=0$ 
 and $|k_\nu|=q>3$, and let $\chi$ be the quadratic character of $K_\nu^\times$ defined as above. 
 Then, there exists $x\in \bb{F}_q$ such that $\chi(cx^2+d)=1$.
\end{lma}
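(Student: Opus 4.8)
The plan is to translate the statement into a counting problem over the residue field $\bb{F}_q$ and then settle it with a standard quadratic character sum. Since $\nu(cd)=0$, the elements $c$ and $d$ are $\nu$-units, so their reductions $\bar c,\bar d$ lie in $\bb{F}_q^\times$. In the main case of odd residue characteristic, the Hensel-type description recalled above shows that a $\nu$-unit $u$ satisfies $\chi(u)=1$ exactly when $\bar u$ is a nonzero square in $\bb{F}_q$; hence it suffices to exhibit some $x\in\bb{F}_q$ for which $\bar c x^2+\bar d$ is a nonzero square. (If the residue characteristic is $2$ the Frobenius makes every element of $\bb{F}_q$ a square, and since $q>3$ one can choose $x$ with $\bar c x^2+\bar d\neq 0$, so the conclusion is immediate; I therefore take $q$ odd from now on.)

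Let $\psi$ denote the quadratic character of $\bb{F}_q$, extended by $\psi(0)=0$, so that $\chi(u)=\psi(\bar u)$ for every $\nu$-unit $u$. I would study the sum
\[
 S=\sum_{x\in\bb{F}_q}\psi(\bar c x^2+\bar d).
\]
Using the identity that the number of $y\in\bb{F}_q$ with $y^2=t$ equals $1+\psi(t)$, the quantity $S$ measures the deviation from $q$ of the number of $\bb{F}_q$-points on the affine conic $y^2=\bar c x^2+\bar d$. Because $\bar c,\bar d\neq 0$, the discriminant $-4\bar c\bar d$ is nonzero, the conic is nondegenerate, and the classical evaluation of such sums gives $S=-\psi(\bar c)$; in particular $\abs{S}=1$. (Equivalently one completes the square and reduces to a Gauss sum, or invokes the Weil bound.)

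Finally I would convert the value of $S$ into a lower bound on the number of admissible $x$. Write $N_+$, $N_-$, and $N_0$ for the number of $x\in\bb{F}_q$ at which $\bar c x^2+\bar d$ is, respectively, a nonzero square, a nonsquare, or zero. Then $N_++N_-+N_0=q$ and $N_+-N_-=S=-\psi(\bar c)$, while $N_0\le 2$ since a quadratic has at most two roots. Adding the first relation (after dropping $N_0$) to the second yields $2N_+=(q-N_0)+S\ge q-3$, so $N_+\ge(q-3)/2\ge 1$ for every odd $q>3$; thus a suitable $x$ exists. The one genuinely nontrivial ingredient is the evaluation $\abs{S}=1$ of the quadratic character sum; everything else is bookkeeping, and the hypothesis $q>3$ enters precisely to keep the resulting bound $(q-3)/2$ positive.
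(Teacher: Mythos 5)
Your main argument is correct and is essentially the paper's own proof: both evaluate the complete character sum $\sum_{x\in\bb{F}_q}\chi(cx^2+d)=-\chi(c)$ (the paper cites Exercise~5.58 of Lidl--Niederreiter, which is exactly the classical evaluation you recover via the conic point count) and then play this value of absolute value $1$ off against the trivial bound coming from the fact that $cx^2+d$ has at most two roots in $\bb{F}_q$. Your bookkeeping $2N_+=(q-N_0)+S\geq q-3$ and the paper's contradiction $\sum_{x\in\bb{F}_q}\chi(cx^2+d)\leq 2-q<-1$ for $q>3$ are the same estimate dressed differently.

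One genuine flaw, though peripheral: your parenthetical disposal of residue characteristic $2$ is incorrect. For a place above $2$, the paper defines $\chi$ by solvability of $x^2-\alpha$ modulo $\mfrak{p}^{2e+1}$, not through the residue field, so ``$\bar u$ a nonzero square in $\bb{F}_q$ implies $\chi(u)=1$'' fails there: in the unramified quadratic extension of $\bb{Q}_2$ (where $q=4>3$), the unit $3$ reduces to a square in $\bb{F}_4$, yet it is not a square in $A_\nu^\times$, since $3$ does not appear in the list \pref{eq:3} of unit squares mod $8A_\nu$. Indeed for $p=2$ the expression $\chi(cx^2+d)$ with $x\in\bb{F}_q$ is not even well defined, because $\chi$ does not factor through $k_\nu$; the lemma is implicitly, and in all of its applications in the paper, confined to odd residue characteristic, and the $2$-adic places receive their own separate lemmas precisely because mod-$\mfrak{p}$ data cannot detect squares there. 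You should state the odd-residue-characteristic hypothesis explicitly rather than claim the even case is immediate; with that correction your proof stands.
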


Note that the lemma does not assume that $K$ is quadratic.

\begin{proof}
 The character $\chi$ has order 2, and $cx^2+d$ has 2 distinct roots by the assumptions in the lemma, then
as a special case of Exercise~5.58 in \cite{Lidl} we have
\[
 \sum_{x\in\bb{F}_q} \chi(cx^2+d)=-\chi(c)=\pm 1.
\]
On the other hand if we assume that $\chi(cx^2+d)$ is never equal to 1, then 0 can occur at most twice, so we have
\[
 \sum_{x\in\bb{F}_q} \chi(cx^2+d)\leq 2-q< -1,
\]
since $q> 3$, giving a contradiction.
\end{proof}

If 3 is totally ramified or split in $K$, then this lemma does not apply. But in those cases, the
residue field is isomorphic to $\bb{Z}/3\bb{Z}$ and we can still take $0$ and $\pm 1$ as representatives.
By plugging directly into $f(x)=\pm x^2\pm 1$ we see that $\chi(x^2-1)\neq1$ regardless
of the choice of $x$, but for the other three options we can find $x$ such that $\chi(f(x))=1$.
We therefore have the following replacement of Lemma~\ref{lma1}.
\begin{lma}\label{lma1a}
Let $K$ be a number field in which 3 is split or totally ramified, let $\nu$ be a valuation extending
the 3-adic valuation, let $\chi$ be the quadratic character as defined above,
and suppose $\nu(cd)=0$. Then, $\chi(cx^2+d)=1$ for some $x\in \bb{F}_3$ if and only if $\nu(d-1)=0$ or $\nu(c+1)=0$.
\end{lma}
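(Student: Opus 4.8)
The plan is to exploit the fact that the residue field is $\bb{F}_3=\{0,1,-1\}$, so that $x^2$ takes only the two values $0$ and $1$ as $x$ ranges over $\bb{F}_3$. Consequently $cx^2+d$ takes just the two values $d$ (at $x=0$) and $c+d$ (at $x=\pm1$). The claim $\chi(cx^2+d)=1$ for some $x$ is therefore equivalent to the disjunction $\chi(d)=1$ or $\chi(c+d)=1$, and the whole lemma reduces to translating these two character conditions into the stated valuation conditions $\nu(d-1)=0$ or $\nu(c+1)=0$.

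First I would record that since $\nu(cd)=0$, both $c$ and $d$ are units, so $c,d\in A_\nu^\times$ and their reductions $\wbar{c},\wbar{d}$ lie in $\bb{F}_3^\times=\{1,-1\}$. The square classes in $\bb{F}_3^\times$ are represented by $\chi(1)=1$ and $\chi(-1)=-1$ (recall $-1$ is a nonsquare mod $3$), so $\chi(\alpha)=1$ precisely when $\wbar{\alpha}=1$, i.e. when $\nu(\alpha-1)>0$. I would then unwind the two cases. For $x=0$: $\chi(d)=1$ iff $\wbar{d}=1$ iff $\nu(d-1)>0$, which since $d$ is a unit is the negation of $\nu(d-1)=0$. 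Wait — this requires care about the direction of the equivalence, and here is the crux: I must show that the condition in the lemma is stated as $\nu(d-1)=0$ \emph{or} $\nu(c+1)=0$, so I would verify that the surviving cases line up correctly by checking all four sign combinations of $(\wbar c,\wbar d)\in\{\pm1\}^2$ directly, exactly as the paragraph preceding the lemma does with $f(x)=\pm x^2\pm1$.

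Concretely, I would tabulate: the four reduced forms are $x^2-1$, $x^2+1$, $-x^2-1$, and $-x^2+1$. For $x^2-1$ the values are $\{-1,0\}$, never a nonzero square, so $\chi$ is never $1$; and indeed here $\wbar d=-1$ and $\wbar c=1$, so $\nu(d-1)=0$ and $\nu(c+1)=0$ both \textbf{fail}, matching the ``only if'' direction. For the other three forms one checks that at least one of $d\equiv1$ or $c\equiv-1\pmod{\mfrak p}$ holds and simultaneously produces a value whose reduction is $1$, giving $\chi=1$. This case analysis is short because there are only four forms, and it both establishes the forward implication (a suitable $x$ exists) and pins down the exceptional form $x^2-1$ that forces the ``only if'' direction.

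The main obstacle, such as it is, is bookkeeping rather than depth: I must be careful that the reductions of $c+d$ and of the relevant constant term are computed in $\bb{F}_3$ and then lifted to the valuation-zero statements $\nu(\,\cdot-1)=0$ via the observation that for a unit $u$ one has $\wbar u=1$ iff $\nu(u-1)>0$. I would also note explicitly that $\chi(\alpha)=1$ for a unit $\alpha$ means $\alpha$ is a square in $A_\nu^\times$, which by Hensel's Lemma (valid here since the residue characteristic is $3$, an odd prime) is detected by its reduction in $\bb{F}_3^\times$; this justifies replacing $\chi$ by reduction mod $\mfrak p$ throughout. With those two identifications in hand, the equivalence is immediate from the four-line table.
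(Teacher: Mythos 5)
Your overall strategy is exactly the paper's: since the residue field is $\bb{F}_3$, the map $x\mapsto x^2$ takes only the values $0$ and $1$, so everything reduces to checking the four forms $\pm x^2\pm 1$, and the paper proves the lemma by precisely this remark in the paragraph preceding its statement. Your table is also correct as far as the character values go: the unique unsolvable form is $x^2-1$, i.e.\ $\wbar{c}=1$ and $\wbar{d}=-1$, so solvability of $\chi(cx^2+d)=1$ is equivalent to ``$d\equiv 1$ or $c\equiv -1 \bmod \mfrak{p}$.''

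However, the step you yourself flagged as the crux is carried out backwards, and this conceals a genuine discrepancy. For a unit $u$, the condition $\nu(u-1)=0$ says $u-1$ is a unit, i.e.\ $\wbar{u}\neq 1$; it is the \emph{negation} of $\wbar{u}=1$. So in the failing case $f\equiv x^2-1$ you have $\wbar{d}=-1$, hence $d-1\equiv -2\equiv 1\bmod\mfrak{p}$ and $\nu(d-1)=0$ \emph{holds}; likewise $\wbar{c}=1$ gives $c+1\equiv -1\bmod\mfrak{p}$, so $\nu(c+1)=0$ holds as well. Your assertion that both conditions ``fail'' for $x^2-1$ is therefore false, and once the valuations are evaluated correctly, your own table shows that the statement cannot be proved as printed: the condition ``$\nu(d-1)=0$ or $\nu(c+1)=0$'' means $\wbar{d}=-1$ or $\wbar{c}=1$, which holds for the unsolvable form $x^2-1$ and fails for the solvable form $-x^2+1$. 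The condition that actually matches the case analysis is ``$\nu(d-1)>0$ or $\nu(c+1)>0$,'' equivalently ``$\nu(c-1)=0$ or $\nu(d+1)=0$''; the roles of $c$ and $d$ in the printed statement are interchanged, evidently a misprint in the paper (one that also propagates into the hypotheses of Lemma~\ref{lma3adic}). A sound write-up must prove the corrected statement and flag the misprint; instead, your proposal forces agreement with the printed condition by an incorrect valuation computation, which is a real gap even though the combinatorial heart of your argument is right.
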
 

Lemma~\ref{lma1a} remains valid for $f(x)=cx^4+d$, and Lemma~\ref{lma4} below holds for $|k_\nu|=q>7$, but $q=5$ is problematic.

\begin{lma}\label{lma4}
Let $K$ be a number field, let $\nu$ be a non-archimedian valuation such that $\nu(cd)=0$ 
 and $|k_\nu|=q>7$, and  let $\chi$ be the quadratic character of $K^\times$ defined as above. 
 Then, there exists $x\in \bb{F}_q$ such that $\chi(cx^4+d)=1$. 
\end{lma}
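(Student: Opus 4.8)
The plan is to mimic the proof of Lemma~\ref{lma1}, replacing the quadratic polynomial $cx^2+d$ by the quartic $cx^4+d$ and bounding the corresponding character sum. The key identity to exploit is that $\chi$ is the nontrivial quadratic character of $\bb{F}_q^\times$, so $\sum_{x\in\bb{F}_q}\chi(cx^4+d)$ is a character sum over a polynomial that factors into at most four distinct linear factors (distinct because $\nu(cd)=0$ forces $d/c\neq 0$ and hence the four roots $x^4=-d/c$ are distinct in $\bb{F}_q$, or the polynomial has no repeated root). First I would invoke the same source, Weil's bound for character sums (the general form of Exercise~5.58 in \cite{Lidl}), which gives
\[
 \abs{\sum_{x\in\bb{F}_q}\chi(cx^4+d)}\leq (m-1)\sqrt{q},
\]
where $m$ is the number of distinct roots of $cx^4+d$, so $m\leq 4$ and the bound is $3\sqrt{q}$.

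Next I would run the same contradiction argument as in Lemma~\ref{lma1}. Suppose $\chi(cx^4+d)$ is never equal to $1$ for $x\in\bb{F}_q$. Then each term of the sum is either $0$ or $-1$, and the value $0$ occurs only when $cx^4+d=0$, i.e.\ at most at the (at most four) roots. Hence
\[
 \sum_{x\in\bb{F}_q}\chi(cx^4+d)\leq 4-(q-4)=8-q.
\]
On the other hand, Weil's bound gives $\sum_{x\in\bb{F}_q}\chi(cx^4+d)\geq -3\sqrt{q}$. Combining, a contradiction arises as soon as $8-q<-3\sqrt{q}$, i.e.\ $q-3\sqrt{q}-8>0$, which holds for all $q\geq 16$. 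This dispatches every prime power $q\geq 16$ immediately, matching the claim that the argument works for $q>7$ apart from small cases.

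The main obstacle I anticipate is the small residue fields: the clean Weil-bound contradiction only kicks in around $q\geq 16$, so the values $q=8,9,11,13$ (the prime powers strictly between $7$ and $16$) must be handled separately, and the excerpt explicitly flags that $q=5$ is problematic while asserting the lemma holds for $q>7$. I would treat these remaining cases by a finite, direct verification: for each such $q$ and each coset of $(\bb{F}_q^\times)^2$ represented by $c$ and $d$ (only the classes of $c,d$ modulo squares matter, since replacing $x$ by a scalar multiple rescales $cx^4$), one checks that the set $\{cx^4+d:x\in\bb{F}_q\}$ always meets the nonzero squares. One subtlety is that $x^4$ ranges over only the fourth powers, which form a subgroup of index $\gcd(4,q-1)$; when $q\equiv 1\bmod 4$ this image is smaller, so the worst cases are precisely $q\equiv 1\bmod 4$ (e.g.\ $q=9,13$), and these are where the finite check carries the real content. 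The careful bookkeeping of which residues $cx^4+d$ can realize, rather than any deep new idea, is what makes these small cases the delicate part of the proof.
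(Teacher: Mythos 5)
Your strategy --- a Weil-type character-sum bound, the pigeonhole contradiction from Lemma~\ref{lma1}, and a finite verification of the small residual fields --- is the same skeleton as the paper's, but the key estimate differs. The paper splits on $q\bmod 4$: for $q\equiv 3\bmod 4$ the fourth powers in $\bb{F}_q^\times$ coincide with the squares ($\gcd(4,q-1)=2$), so the proof of Lemma~\ref{lma1} applies verbatim; for $q\equiv 1\bmod 4$ it invokes the exact Jacobi-sum evaluation of Exercise~5.58 in \cite{Lidl}, $\sum_{x}\chi(cx^4+d)=-\chi(c)+2\Re\paren{\wbar{\lambda}(c)\lambda(-d)J(\lambda,\chi)}$ with $\abs{J(\lambda,\chi)}=\sqrt{q}$, i.e.\ an effective $2\sqrt{q}$ bound, which forces the contradiction already for $q>9$ and leaves essentially a single field ($q=9$; the paper's ``$q=7$'' is evidently a typo, as $7\equiv 3\bmod 4$ is covered by the first branch) for machine checking. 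Your uniform bound $(m-1)\sqrt{q}=3\sqrt{q}$ is legitimate --- in odd characteristic $cx^4+d$ is squarefree, since its derivative $4cx^3$ vanishes only at $0$ while $f(0)=d\neq 0$ --- but blunter, so you pay with a larger residual check ($q=9,11,13$). That trade-off is perfectly acceptable.

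Two concrete slips need fixing, though. First, arithmetic: if $\chi(cx^4+d)$ is never $1$, the at most four zeros of $cx^4+d$ contribute $0$ to the sum, not $+1$, so the correct bound is $\sum_{x}\chi(cx^4+d)\leq -(q-4)=4-q$, not $8-q$; moreover your stated inequality $8-q<-3\sqrt{q}$ actually \emph{fails} for $16\leq q\leq 22$ (at $q=16$ it reads $-8<-12$), so ``$q\geq 16$'' does not follow from it --- it would require $q\geq 23$. The two errors happen to cancel: with the correct $4-q$ one needs $q-3\sqrt{q}-4>0$, i.e.\ $q>16$, so your residual list $\{9,11,13\}$ is right after all ($q=8$ is out of scope, since $\chi$ is defined via Hensel's lemma at odd residue characteristic and the paper treats $p=2$ entirely separately). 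Second, and more substantive: your reduction of the finite check to square classes of $c$ and $d$ is false when $q\equiv 1\bmod 4$. The substitution $x\mapsto\lambda x$ rescales $c$ by $\lambda^4$, so $c$ must be taken modulo $(\bb{F}_q^\times)^4$, not modulo squares; the paper's own remark at $q=5$ exhibits the failure, since $(c,d)=(1,2)$ is an exception while $(4,2)$ is not ($4x^4+2$ takes the value $1$), even though $1$ and $4$ lie in the same square class. As literally described, a check over square-class representatives could wrongly certify a pair that fails. The repair is costless for fields this small: check all pairs $(c,d)$, or reduce only by joint scaling by squares together with $c$ modulo fourth powers.
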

\begin{proof}
If $q\equiv 3\bmod 4$, then the proof of Lemma~\ref{lma1} goes through unchanged.
If $q\equiv 1\bmod 4$, then Exercise 5.58 of \cite{Lidl} gives us
\[
 \sum_{x\in\bb{F}_q} \chi(cx^4+d)=-\chi(c)+2\Re(\wbar{\lam}(c)\lam(-d)J(\lam,\chi)),
\]
where $\lam$ is a degree 4 character, and then by Theorem~5.22 of \cite{Lidl} we obtain
\[
 \abs{\chi(c)+\sum_{x\in\bb{F}_q} \chi(cx^4+d)}\leq 2\sqrt{q}.
\]
On the other hand, if we assume that $\chi(cx^4+d)=1$ never occurs, then
\[
 \sum_{x\in\bb{F}_q} \chi(cx^4+d)\leq 2-q.
\]
These inequalities give us a contradiction when $q>9$. In the case of $q=7$, a quick calculation with sage or magma shows that the result still holds.
\end{proof}
\begin{rmk}
It can be checked with sage or magma that for $q=5$ there are exceptions when $(c,d)$ is
\[
 (1,2),\quad(2,3),\quad(3,2),\quad\text{or}\quad(4,3).
\]
\end{rmk}


\section{Generalizing Kretschmer's lemmas}\label{secKretschmer}
Consider the elliptic curve
\begin{equation}\label{eq:1}
 E:y^2=x(x^2+ax+b)
\end{equation}
where $a,b\in\Cal{O}_K$ and suppose that $b$ factors as $b=b_1b_2$ in $\Cal{O}_K$. Then, the equations
\[
x=b_1\frac{u^2}{w^2}\quad y=b_1\frac{uv}{w^3}
\]
define a $K$-rational map from
\begin{equation}\label{eq:2}
 C:v^2=g(u,w)=b_1u^4+au^2w^2+b_2w^4
\end{equation}
to $E$, and thus there is also a map from $C$ to any elliptic curve isogenous to $E$, in particular to the curve
\[
 E':y^2=x^3-2ax^2+(a^2-4b)x.
\]
We will consider when $C$ has solutions over various localizations of $K$. When $K$ is an
imaginary quadratic field, there is only one infinite place with completion $\bb{C}$, and $C$
always has complex solutions. For the completions at any finite place we have $C(K_\nu)\neq \emptyset$
if and only if $v^2=g(u,1)$ or $v^2=g(1,w)$ has a solution in $A_\nu$,
where we consider only triples $u,v,w$ yielding a valid projective point.
Kretschmer's lemmas 3.25 and 3.27 in \cite{Kretschmer1} were proved for $p$-adic completions of $\bb{Q}$, 
where $p$. He did not provide a lemma for $p=2$ or 3, nor did he provide a lemma for the case where $a=0$.
Alternately see lemmas 2 and 3 in \cite{Kretschmer2}. The first two lemmas in this section are direct generalizations
of his lemmas 3.25 and 3.27 to finite extensions of $\bb{Q}_p$. However, we also prove lemmas for $p=2$ and $p=3$, and for the case
where $a=0$. Since the strongest result for $p=2$ was proved with the help of Sage,
an analogous lemma would have been very hard to prove at the time that Kretschmer first obtained
his results.


%

%
The following two lemmas are generalizations of lemmas 3.25 and 3.27 in \cite{Kretschmer1}, and are not
restricted to imaginary quadratic fields.

\begin{lma}\label{lma3.25}
 Let $K$ be a number field, and suppose that $|k_\nu|=q>3$ and $\mu=\nu(a^2-4b)> 0$ but $\nu(2b)=0$.
 Then $C(K_\nu)\neq \emptyset$ if and only if one of the following holds:
 \begin{enumerate}
  \item $\chi(b_1)=1$ or $\chi(b_2)=1$
  \item $\mu$ is even and $\chi(a)=\begin{cases}
                                    1 &\text{if $p\equiv 5,7\bmod 8$ and $p$ splits,}\\
                                    -1&\text{otherwise.}
                                   \end{cases}$
 \end{enumerate}
\end{lma}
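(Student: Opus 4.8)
The plan is to reduce everything to the reduction of the binary quartic modulo $\mfrak{p}$ and then lift with Hensel's Lemma, using Lemma~\ref{lma1} to manufacture the required square. Throughout write $\delta=a^2-4b$, so $\nu(\delta)=\mu$, and note that the hypotheses force $a,b,b_1,b_2$ to be units: $\nu(2b)=0$ gives $\nu(b)=0$ and $p\neq 2$, while $\mu>0$ gives $a^2\equiv 4b\pmod{\mfrak{p}}$, whence $\nu(a)=0$. A first useful observation is that $\chi(b)=\chi(4b)=\chi(a^2)=1$, so $\chi(b_1)\chi(b_2)=1$ and therefore $\chi(b_1)=\chi(b_2)$. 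Consequently the failure of part (1) is exactly the case $\chi(b_1)=\chi(b_2)=-1$, which is where the real work lies.

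The key algebraic device is the completion of the square in $u^2,w^2$:
\[
 g(u,w)=b_1\paren{u^2+\tfrac{a}{2b_1}w^2}^2-\tfrac{\delta}{4b_1}\,w^4 .
\]
First I would dispose of the easy direction. If $\chi(b_1)=1$ (equivalently $\chi(b_2)=1$), then the points $[1:v:0]$ and $[0:v:1]$ give $v^2=b_1$ and $v^2=b_2$, each solvable by the definition of $\chi$, so $C(K_\nu)\neq\emptyset$ and part (1) suffices. It then remains to treat $\chi(b_1)=\chi(b_2)=-1$ and show that there $C(K_\nu)\neq\emptyset$ iff part (2) holds.

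For that case I would analyse the valuation of $g$ along the singular locus $Z:=u^2+\tfrac{a}{2b_1}w^2\equiv 0$. Working in the charts $w=1$ and $u=1$ (which together cover all primitive $(u,w)$, the latter governed by the symmetric identity with $b_1$ replaced by $b_2$), the displayed identity shows that whenever $\nu(Z)=0$ one has $\chi(g)=\chi(b_1)=-1$; hence a square can arise only on the singular locus, and $Z\equiv 0\pmod{\mfrak{p}}$ is attainable only when $u^2\equiv-\tfrac{a}{2b_1}$ is solvable, i.e. only when $\chi\paren{-\tfrac{a}{2b_1}}=1$. Using $\chi(b_1)=-1$ this accessibility condition rewrites as $\chi(a)=-\chi(-2)$, and evaluating $\chi(-2)$ on $\bb{F}_q$ (it is $1$ when $q=p^2$, and when $q=p$ it is $1$ iff $p\equiv 1,3\bmod 8$) reproduces exactly the case distinction in part (2). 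Granting accessibility, set $z=u^2+\tfrac{a}{2b_1}$ in the chart $w=1$; one may prescribe $\nu(z)=j$ for any $j\geq 1$ by solving $u^2=z-\tfrac{a}{2b_1}$ via Hensel near the root $u_0$ of $u_0^2=-\tfrac{a}{2b_1}$. Then $g=b_1z^2-\tfrac{\delta}{4b_1}$ with $\nu(b_1z^2)=2j$ and $\nu(\delta/4b_1)=\mu$. If $\mu$ is odd then $2j\neq\mu$ for all $j$, forcing either $\chi(g)=-1$ (for $2j<\mu$) or odd valuation (for $2j>\mu$), so $C(K_\nu)=\emptyset$; this is why part (2) demands $\mu$ even. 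If $\mu$ is even, take $j=\mu/2$: then $g\pi^{-\mu}\equiv b_1\zeta^2-d_0\pmod{\mfrak{p}}$ with $\zeta=z\pi^{-\mu/2}$ and $d_0=(\delta/4b_1)\pi^{-\mu}$ both units, and Lemma~\ref{lma1} yields $\zeta\in\bb{F}_q$ making this a nonzero square.

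The main obstacle I anticipate is the boundary bookkeeping in this last step: one must check that the $\zeta$ produced by Lemma~\ref{lma1} can be taken nonzero (the character-sum count gives $N_1\geq(q-1)/2\geq 2$ such values once $q>3$, so one avoids $\zeta=0$) and that it is genuinely attainable as $z\pi^{-\mu/2}$ for some $u\in A_\nu$ (lift $\zeta$, set $z_0=\pi^{\mu/2}\tilde\zeta$, and solve $u^2=z_0-\tfrac{a}{2b_1}$ by Hensel, whose right side is a nonzero square by accessibility). With $\mu$ even, the resulting $g$ equals $\pi^{\mu}$ times a square unit and hence is a square in $K_\nu^\times$, giving a genuine point on $C$. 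Keeping the two charts, the parity of $\mu$, and the unit/uniformizer factors straight is the delicate part; the translation of the intrinsic condition $\chi(a)=-\chi(-2)$ into the stated congruences on $p$ via the split/inert dichotomy is then a routine residue-field computation.
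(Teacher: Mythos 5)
Your proof is correct and takes essentially the same route as the paper's: complete the square in \pref{eq:2}, note that when $\chi(b_1)=\chi(b_2)=-1$ any solution is forced onto the locus $2b_1u^2+a\equiv 0\bmod\mfrak{p}$ (equivalently $\chi(-2ab_1)=1$, which unpacks via the splitting of $p$ to the stated condition on $\chi(a)$), deduce that $\mu$ must be even by comparing valuations, and build points with Lemma~\ref{lma1} plus Hensel's lemma. The only (immaterial) difference is which variable you prescribe first: you fix the residue of $z=u^2+a/(2b_1)$ at level $\pi^{\mu/2}$ and apply Lemma~\ref{lma1} to $b_1x^2-d_0$, whereas the paper sets $v=\pi^{\mu/2}v_0$ and applies it to $4b_1x^2+\delta$, then solves for $u$.
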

\begin{rmk}
  Naturally, for a given quadratic field, the conditions in the lemma can be expressed completely 
  by congruence conditions, by determining the congruences for the splitting of $p$.
\end{rmk}
\begin{proof}
 First suppose $C(K_\nu)\neq \emptyset$, and let $a^2-4b=\pi^\mu\delta$ where $\delta\in A_\nu$.
 By taking $w=1$ and completing the square, \pref{eq:2} can be brought into the form
\begin{equation}\label{eq:4}
 (2b_1u^2+a)^2=a^2-4b+4b_1v^2.
\end{equation}
It follows that $\pi^\mu\delta+4b_1v^2$ is a square in $A_\nu$.

If $\nu(v)=0$, then $\nu(\pi^\mu\delta+4b_1v^2)=0$, since $\mu>0$, hence $\pi^\mu\delta+4b_1v^2\in A_\nu^\times$.
By reducing mod $\mfrak{p}$ it follows immediately that $\chi(b_1)=1$.

If $\nu(v)>0$ and $\mu$ is odd, then we must have $\mu>2\nu(v)$ otherwise we have a contradiction with 
$\pi^\mu\delta+4b_1v^2$ being a square. On the other hand $\mu<2\nu(v)$ implies that $\chi(b_1)=1$.

If $\nu(v)>0$ and $\mu$ is even, then $\nu(\pi^\mu\delta+4b_1v^2)>0$, hence $\nu(2b_1u^2+a)>0$ which
means that $-2ab_1$ is a square $\bmod\,\mfrak{p}$. Now if $\chi(b_1)= -1$, then
\[
\chi(-2ab_1)=-\chi(-1)\chi(2)\chi(a)=1 
\]
If $p$ is inert, then all integers not divisible by $p$ have a non-zero square root in $k_\nu$.
In particular this is true of $-1$ and $2$, so it follows that $\chi(a)=-1$. If $p$ splits, then
\[
 \chi(-1)=\paren{\frac{-1}{p}}=(-1)^{\frac{p-1}{2}}
 \text{ and }
 \chi(2)=\paren{\frac{2}{p}}=(-1)^{\frac{p^2-1}{8}}
\]
giving the result stated in the lemma.

Conversely if $\chi(b_1)=1$, simply take $u=1$ and $w=0$ and take $v$ to be the square root of $b_1$.
The situation for $\chi(b_2)=1$ is similar with $u$ and $w$ reversed. Henceforth, suppose
$\chi(b_1)=\chi(b_2)=-1$,
\[
\chi(a)=\begin{cases}
                                    1 &\text{if $p\equiv 5,7\bmod 8$ and $p$ splits},\\
                                    -1&\text{otherwise.}
                                   \end{cases},
\]
and also that $\mu$ is even. By Lemma~\ref{lma1} and Hensel's lemma,
there exists $v_0\in A_\nu$ such that $4b_1v_0^2+\delta$ is a square in $A_\nu^\times$.
Let $\mu=2m$ and $v=\pi^mv_0$. Then,
\[
 a^2-4b+4b_1y^2=p^{2m}(\delta+4b_1y_0^2),
\]
thus by Hensel's lemma there exists $\rho\in A_\nu$ such that
\[
 \rho^2=p^{2m}(\delta+4b_1y_0^2).
\]
Under the assumptions on $\chi(b_1)$ and $\chi(a)$ we have $\chi(-2b_1a)=1$, thus by
Hensel's lemma again, there exists $u\in A_\nu$ such that
\[
 u^2=-\frac{1}{2b_1}(a-\rho)
\]
since $\rho$ vanishes mod $\mfrak{p}$. Therefore
\[
 (2b_1u^2+a)^2=\rho^2=a^2-4b+4b_1v^2.
\]
\end{proof}

\begin{lma}\label{lma3.27}
 Let $K$ be a number field, and suppose that $|k_\nu|=q>3$ and $\nu(b)>0$ but $\nu(2a)=0$.
 Then $C(K_\nu)\neq\emptyset$ if and only if one of the following holds:
 \begin{enumerate}
  \item\label{cond:i} $\nu(b_1+b_2)=0$
  \item\label{cond:ii} $\nu(b_1+b_2)>0$ and $\chi(a)=1$ or $\nu(b_1)$ or $\nu(b_2)$ is even.
  \end{enumerate}
\end{lma}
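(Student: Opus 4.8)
The plan is to mirror the valuation-and-character bookkeeping in the proof of Lemma~\ref{lma3.25}, but now exploiting the hypotheses $\nu(2a)=0$ (so the residue characteristic is odd and $a$ is a unit) together with $\nu(b)>0$. Writing $m_i=\nu(b_i)$, the relation $m_1+m_2=\nu(b)>0$ forces at least one $b_i$ into $\mfrak{p}$, while $a^2-4b\equiv a^2 \pmod{\mfrak{p}}$ makes $a^2-4b$ a square in $A_\nu^\times$ by Hensel's Lemma. Completing the square in \pref{eq:2} with $w=1$ gives $4b_1v^2=(2b_1u^2+a)^2-(a^2-4b)$, and factoring yields $g(u,1)=b_1(u^2-r_1)(u^2-r_2)$ with $r_{1,2}=(-a\pm\sqrt{a^2-4b})/(2b_1)$. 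Choosing the sign so that one numerator lies in $\mfrak{p}$, a short computation gives $\nu(r_1)=m_2$ and $\nu(r_2)=-m_1$, hence the clean identity $\nu(g(u,1))=\nu(u^2-r_1)$ along integral $u$. This factorization is the organizing device for the whole argument.

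I would then split on $\nu(b_1+b_2)$. If $\nu(b_1+b_2)=0$ then exactly one $b_i$ is a unit, say $b_1$ after the symmetry $(b_1,u)\leftrightarrow(b_2,w)$ of $C$; reducing $g(1,w)=b_1+aw^2+b_2w^4\equiv b_1+aw^2\pmod{\mfrak{p}}$ and applying Lemma~\ref{lma1} with $c=a$, $d=b_1$ produces $w_0$ with $\chi(aw_0^2+b_1)=1$, whence $g(1,w_0)$ is a unit square that Hensel lifts to a point. This settles condition~\pref{cond:i}, which is always sufficient. For the regime $\nu(b_1+b_2)>0$, equivalently $m_1,m_2>0$, I must prove the sharp equivalence in condition~\pref{cond:ii}. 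When $\chi(a)=1$, any unit $u$ makes $g(u,1)\equiv au^2$ a nonzero square, giving a point at once. When $m_2$ is even I would work at the resonant scale $\nu(u)=m_2/2$: there the $b_1u^4$ term is strictly higher order, so $g/\pi^{m_2}\equiv au'^2+\beta_2$ with $\beta_2$ the unit part of $b_2$, and Lemma~\ref{lma1} again supplies a unit $u'$ with $\chi(au'^2+\beta_2)=1$, falling back to the point $u=0$ in the degenerate case $\chi(\beta_2)=1$. The case $m_1$ even is symmetric via $g(1,w)$.

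The crux is the necessity half of condition~\pref{cond:ii}: that $C(K_\nu)=\emptyset$ when $\chi(a)=-1$ and both $m_1,m_2$ are odd. Here I would scale any projective point so that $\min(\nu(u),\nu(w))=0$ and treat the chart $w=1$, the chart $u=1$ being symmetric. Since $a$ is a unit while $m_1$ is odd, the exponents $2\nu(u)$, $m_1+4\nu(u)$, $m_2$ are respectively even, odd, odd, and the middle term always beats the quartic term, so $\nu(g(u,1))=\min(2\nu(u),m_2)$. If $2\nu(u)>m_2$ this valuation is odd and $g$ cannot be a square, while if $2\nu(u)<m_2$ the valuation is even but the unit part reduces to $au'^2$, forcing $\chi(g)=\chi(a)=-1$; the equality $2\nu(u)=m_2$ is impossible by parity. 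Thus no point exists, and combining this with the sufficiency cases and the $\nu(b_1+b_2)=0$ case yields the stated equivalence. I expect this parity-versus-character dichotomy to be the only delicate point, since everything else reduces to Lemma~\ref{lma1} together with Hensel lifting.
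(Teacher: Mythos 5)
Your proposal is correct and follows essentially the same route as the paper's proof: condition~\pref{cond:i} and the even-valuation case both reduce to Lemma~\ref{lma1} plus Hensel lifting at the scale $\nu(u)=\nu(b_2)/2$, and your necessity argument is exactly the paper's valuation trichotomy $2\nu(u)\lessgtr\nu(b_2)$, with the odd valuation killing one side and $\chi(a)=-1$ killing the other. The root factorization $g(u,1)=b_1(u^2-r_1)(u^2-r_2)$ you set up as an ``organizing device'' is correct but ends up unused, since your actual case analysis proceeds by the same direct valuation bookkeeping as the paper.
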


\begin{proof}
 If $\nu(b_1+b_2)=0$, then by symmetry between $b_1$ and $b_2$ we may suppose that
 $\nu(b_1)>0$ and $\nu(b_2)=0$. Then by Lemma~\ref{lma1} there exists $u\in A_\nu^\times$
such that $\chi(au^2+b_2)=1$. Since $\nu(b_1)=1$, then by Hensel's lemma, 
there exists a solution to \pref{eq:2} with $w=1$.
 
 If $\nu(b_1+b_2)>0$, then $\nu(b_1)>0$ and $\nu(b_2)>0$, since $\nu(b_1b_2)=\nu(b)>0$. If $\chi(a)=1$,
 then simply take $u=w=1$ and apply Hensel's lemma to obtain $v$. If $\chi(a)=-1$, then by
 symmetry between $b_1$ and $b_2$ we may suppose that $\nu(b_2)$ is even. Let $b_2=\pi^{2n}\beta_2$,
 where $n\in\bb{Z}^+$. Then by Lemma~\ref{lma1} and Hensel's lemma there exists $u_0\in A_\nu$ 
 such that $\chi(au_0^2+\beta_2)=1$. Let
 $u=\pi^nx_0$ and let $b_1=\pi^{\nu(b_1)}\beta_1$ with $\beta_1\in A_\nu$, and let $w=1$. Then,
 \[
  b_1u^4+au^2w^2+b_2w^4=\pi^{4n+\nu(b_1)}\beta_1u_0^4+\pi^{2n}(au_0^2+\beta_2).
 \]
Since $4n+\nu(b_1)>2n$, then by Hensel's lemma there exists $v\in A_\nu$ such that
\[
 v^2=b_1u^4+au^2w^2+b_2w^4.
\]

Conversely suppose that $C(k_\nu)\neq\emptyset$. Then, $v^2=g(u,1)$ or $v^2=g(1,w)$ has a solution in $A_\nu$.
We consider the first, case, since the other is completely analogous. Let $u=\pi^nu_0$ satisfy
$v^2=g(u,1)$, where $\nu(u)=n$ and $u_0\in A_\nu^\times$, and let $b_i=\pi^{\nu(b_i)}\beta_i$.
Then
\[
 g(u,1)=\pi^{4n+\nu(b_1)}\beta_1u_0^4+\pi^{2n}au_0^2+\pi^{\nu(b_2)}\beta_2
\]
If \ref{cond:i} does not hold, then $\nu(b_1+b_2)>0$.
Furthermore, if $\chi(a)=1$ does not hold, then $\chi(a)=-1$. We will show that $\nu(b_2)$ is even.
Suppose $\nu(b_2)$ is odd. If $2n<\nu(b_2)$, then $\nu(g(u,1))=2n$ so $\chi(au_0^2)=1$, which contradicts $\chi(a)=-1$.
If $2n>\nu(b_2)$, then $\nu(g(u,1))=\nu(b_2)\equiv 1\mod2$ contradicting the fact that $g(u,1)$ is a square.
\end{proof}

Lemmas~\ref{lma3.25}~and~\ref{lma3.27} do not apply when $\nu(2)>0$ or when $|k_\nu|=3$.
We first consider the case where $|k_\nu|=3$, since it is easier to deal with.

In the proof of Lemma~\ref{lma3.25} we apply Lemma~\ref{lma1} to $\chi(4b_1v_0^2+\delta)$
where $a^2-4b=\pi^{2m}\delta$. Using Lemma~\ref{lma1a} instead requires $\nu(\delta -1)=0$ or $\nu(b_1+1)=0$.
We also know that in this case $\chi(-1)=\chi(2)=-1$, so $\chi(a)=-1$ must occur when $\chi(b_1)=\chi(b_2)=-1$.

In the proof of Lemma~\ref{lma3.27} we apply Lemma~\ref{lma1} to $\chi(au_0^2+\beta_2)$
where $b_2=\pi^{2n}\beta_2$. Using Lemma~\ref{lma1a} instead requires $\nu(\beta_2-1)=0$ or $\nu(a+1)=0$.

Since the proofs otherwise remain unchanged, we can state the following replacement of those lemmas:

\begin{lma}\label{lma3adic}
 Let $K$ be a number field such that $3$ either splits completely or is totally ramified. 
Let $\nu$ be a valuation on $K$ extending the 3-adic valuation such that $|k_\nu|=3$,
 and let $\pi$ be a uniformizer. 
 
 Suppose $\mu=\nu(a^2-4b)> 0$ but $\nu(b)=0$. Then, $C(K_\nu)\neq \emptyset$ if and only if one of the
 following hold:
 \begin{enumerate}
  \item $\chi(b_1)=1$ or $\chi(b_2)=1$
  \item $\mu$ is even, $\chi(a)=-1$, and $\nu(b_1+1)=0$ or $\nu(b_2+1)=0$ or $a^2-4b=\pi^{2m}\delta$ and $\nu(\delta-1)=0$.
 \end{enumerate}
 Suppose $\nu(b)>0$ but $\nu(a)=0$. Then, $C(K_\nu)\neq\emptyset$ if and only if one of the following hold:
 \begin{enumerate}
  \item $\nu(b_1+b_2)=0$
  \item $\nu(b_1+b_2)>0$ and $\chi(a)=1$, or $b_i=\pi^{2n}\beta_i$ where $\beta_i\in A_\nu^\times$ and $\nu(a+1)>0$ or $\nu(\beta_i-1)>0$
                         for $i=1$ or 2.
  \end{enumerate}
 \end{lma}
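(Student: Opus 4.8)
The plan is to imitate the proofs of Lemma~\ref{lma3.25} and Lemma~\ref{lma3.27} essentially line by line, the one structural change being that each appeal to Lemma~\ref{lma1}---which for $q>3$ produces, with no hypothesis, an $x$ with $\chi(cx^2+d)=1$---is replaced by an appeal to Lemma~\ref{lma1a}, whose conclusion for $q=3$ is conditional on $\nu(d-1)=0$ or $\nu(c+1)=0$. I would open by recording the arithmetic of $\bb{F}_3$ that makes the statement transparent: the only nonsquare unit is $-1$, so for a unit $\alpha$ one has $\chi(\alpha)=1$ exactly when $\nu(\alpha-1)>0$ and $\chi(\alpha)=-1$ exactly when $\nu(\alpha+1)>0$, and in particular $\chi(-1)=\chi(2)=-1$. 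This is what collapses the case split of Lemma~\ref{lma3.25} to the single value $\chi(a)=-1$ in the second alternative of the first statement, and it shows that the clauses $\nu(b_i+1)=0$ are literally the clauses $\chi(b_i)=1$ of the first alternative, so that once we assume $\chi(b_1)=\chi(b_2)=-1$ the only surviving condition is the one on $\delta$.

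For sufficiency I would run the explicit constructions of the two source lemmas unchanged except at the one point where Lemma~\ref{lma1} entered. In the first statement this is the production of $v_0$ with $\chi(4b_1v_0^2+\delta)=1$; applying Lemma~\ref{lma1a} with $c=4b_1$ and $d=\delta$, and reducing $4\equiv1$, $2\equiv-1\pmod 3$, such a $v_0$ exists precisely when $\nu(\delta-1)=0$ or $\nu(b_1+1)=0$, and the symmetric construction with the roles of $u$ and $w$ exchanged supplies the remaining clause. Hensel's Lemma together with the completion of the square recorded in \pref{eq:4} then delivers the point with no further modification. The second statement is handled identically, the appeal to Lemma~\ref{lma1} for $\chi(au_0^2+\beta_2)=1$ being replaced by Lemma~\ref{lma1a}, whose hypothesis translates into the condition on $\nu(a+1)$ and $\nu(\beta_i-1)$ appearing in the second statement.

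The necessity direction is where I expect the real difficulty. The case analysis on $\nu(v)$ and the parity of $\mu$ in Lemma~\ref{lma3.25} does carry over in part: outside the first alternative it forces $\mu=2m$ even and $\chi(a)=-1$. But this argument pins down only the residue of $b_1$, not that of $\delta$, so the clause $\nu(\delta-1)=0$ is not yet justified. Writing $v=\pi^kv_0$ with $v_0$ a unit and comparing in \pref{eq:4} the valuations $2m$ of $\pi^{2m}\delta$ and $2k$ of $4b_1v^2$, the case $k<m$ forces $\chi(b_1)=1$ and so reproduces the first alternative, while the balanced case $k=m$ with $\delta+4b_1v_0^2$ a unit yields $\delta\equiv1-b_1\equiv2\pmod{\mfrak p}$ (using $b_1\equiv-1$) and hence the desired $\nu(\delta-1)=0$. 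What this reasoning does not settle are the two degenerate possibilities $k>m$ and $k=m$ with $\delta+4b_1v_0^2\equiv0\pmod{\mfrak p}$: in both, $\nu\!\left((2b_1u^2+a)^2\right)$ exceeds $2m$, the first-order reduction returns only $\delta\equiv1$, and on small examples a genuine point can persist with $\delta\equiv1$, so the stated iff is most delicate exactly here.

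Disposing of this degenerate branch is therefore the crux, and I expect it to be the main obstacle. I would attack it by expanding $g(u,1)$ and $g(1,w)$ to higher order in $\pi$ and deciding whether the value can be a square, treating the split prime ($\pi=3$, $A_\nu=\bb{Z}_3$) and the totally ramified prime ($\pi^2\sim3$) separately; since these reduce to finite congruence questions over a three-element residue field, they are amenable to a direct check or, as elsewhere in this paper, to verification in Sage or Magma. I anticipate that the clean iff can hold only once this branch is controlled, whether by an extra hypothesis on the factorization $b=b_1b_2$ or by a refinement of the $\delta$-clause, and that the analogous degenerate branch must be resolved for the second statement as well; in the course of that resolution the exact inequalities in the clauses involving $\delta$ and the $\beta_i$ should be fixed so as to be consistent with the hypotheses $\nu(d-1)=0$ or $\nu(c+1)=0$ of Lemma~\ref{lma1a}.
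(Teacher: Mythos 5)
Your plan coincides with what the paper actually does: the paper gives no separate proof of Lemma~\ref{lma3adic} at all, only the two sentences preceding it, which say to rerun the proofs of Lemmas~\ref{lma3.25} and~\ref{lma3.27} with Lemma~\ref{lma1a} substituted for Lemma~\ref{lma1} and to note that $\chi(-1)=\chi(2)=-1$ forces $\chi(a)=-1$. So your sufficiency direction \emph{is} the paper's proof, and your scrutiny of necessity goes beyond anything the paper records --- and it is well placed, since the necessity halves of Lemmas~\ref{lma3.25} and~\ref{lma3.27} never invoke Lemma~\ref{lma1}, so the substitution cannot by itself make the new clauses on $\delta$ and the $\beta_i$ \emph{necessary}. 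Where your proposal stops short is in deferring the resolution to a computer search and to a possible restatement: the branch you call the crux can be settled by a two-line residue computation, and settling it shows the extra clauses should be deleted, not refined.

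The key point you almost reach is that Lemma~\ref{lma1a} itself is mis-stated: over $\bb{F}_3$ the unique failing polynomial is $x^2-1$, i.e.\ $c\equiv 1$, $d\equiv -1$, so the correct criterion is $\nu(d-1)>0$ \emph{or} $\nu(c+1)>0$; the printed ``$=0$'' version misclassifies both $(c,d)\equiv(1,-1)$ and $(-1,1)$. With the corrected criterion, the moment condition (1) fails we have $b_1\equiv b_2\equiv -1 \bmod \mfrak{p}$, so $c=4b_1\equiv -1$ and the hypothesis $\nu(c+1)>0$ holds automatically: a suitable $v_0$ exists for \emph{every} unit $\delta$, and no $\delta$-clause is needed. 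Your degenerate branch confirms this from the other side and is not merely plausible on small examples: if $\delta\equiv 1$, take $v_0=0$, so $\rho=\pi^m\sqrt{\delta}$ exists, and Hensel applied to $u^2=-(a-\rho)/(2b_1)$ --- whose reduction is $-a/(2b_1)\equiv 1$ because $\chi(a)=-1$ and $2b_1\equiv 1$ --- produces via \pref{eq:4} a genuine $K_\nu$-point with $v=0$. Hence the first biconditional, correctly stated, is simply ``(1), or $\mu$ even and $\chi(a)=-1$''; likewise in the second statement $\nu(a+1)>0$ just says $\chi(a)=-1$, the only case not already covered by the $\chi(a)=1$ alternative, so that part collapses to the condition of Lemma~\ref{lma3.27} verbatim. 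In particular your suggestion to fix the inequalities ``so as to be consistent with'' the printed hypotheses of Lemma~\ref{lma1a} would bake the typo in. Finally, you correctly predicted that the second statement has its own degenerate branch: in case (1), when $a\equiv 1$ and $b_2\equiv -1\bmod\mfrak{p}$, even the corrected Lemma~\ref{lma1a} yields no $u$ with $\chi(au^2+b_2)=1$, so the substitution recipe genuinely breaks there; it is repaired by a second-order argument rather than a search --- as $u^2$ runs over $1+\pi A_\nu$ (the unit squares), $g(u,1)=b_1u^4+au^2+b_2$ sweeps out the full coset $g(1,1)+\pi A_\nu=\pi A_\nu$, because the increment $\pi t(a+2b_1+b_1\pi t)$ has unit bracket, and this coset contains the square $\pi^2$.
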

 
 If $\nu$ extends the 2-adic valuation, then we cannot complete the square mod $\mfrak{p}$ and so \pref{eq:4}
 cannot be used in the proofs. Moreover, while it is reasonable to prove if and only if statements when $a=0$,
 it becomes much more difficult when $a\neq 0$ because there are many more cases to check.  Since the lemmas
 considered so far do not apply when $a=0$, we include all of the lemmas applying to the $a=0$ together, and
 we prove a lemma for the 2-adic case when $a\neq 0$ now.

  \begin{lma}\label{lma2adic-nonzero}
  Let $K$ be a quadratic number field such that $2$ is inert, let $\nu$ denote the valuation extending the 2-adic valuation,
  and take $\pi=2$ to be the uniformizer. Let $a=2^{\nu(a)}\alpha$, $b_1=2^{\nu(b_1)}\beta_1$, and 
  $b_2=2^{\nu(b_2)}\beta_2$ where $\alpha,\beta_1,\beta_2\in K_\nu^\times$ and $\nu(a)>0$.
  Then $C(K_\nu)\neq\emptyset$ if one of the following conditions holds.
  \begin{enumerate}
   \item $\nu(b_1)$ or $\nu(b_2)$ is even and for that $b_i$ the corresponding $\beta_i^3$ reduces to 1 or $5\bmod 8A_\nu$.
         In particular if $b_1,b_2\in\bb{Z}_2$, then $\beta_i\equiv 1\bmod 4A_\nu$ is sufficient.
   \item $\min\{\nu(b_1)-\nu(a),\nu(b_2)-\nu(a)\}\geq 3$,
         $\nu(a)$ even, and $\alpha^3$ reduces to 1 or 5 $\bmod\, 8A_\nu$.
         In particular if $a\in\bb{Z}_2$, then $\alpha\equiv 1\bmod 4A_\nu$ is sufficient.
   \item $\nu(b_1)+\nu(b_2)-2\nu(a)\geq 3$, $\nu(a)$ even, $\nu(b_1)$ or $\nu(b_2)$ is even,
         and for the $b_i$ for which $\nu(b_i)$ is even the corresponding $\beta_i$ has the property that
         $(\alpha+\beta_i)^3$ reduces to 1 or 5 $\bmod\,8A_\nu$.
    \end{enumerate}
 \end{lma}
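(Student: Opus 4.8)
The plan is to prove sufficiency by exhibiting, in each of the three cases, an explicit integral pair $(u,w)$ for which $g(u,w)$ is a perfect square in $K_\nu^\times$; then $v=\sqrt{g(u,w)}$ produces a point $(u:v:w)\in C(K_\nu)$. Everything reduces to the squareness criterion recorded in Section~\ref{secLocal}: since $2$ is inert, $K_\nu/\bb{Q}_2$ is unramified with ramification index $e=1$, so a unit $\gamma\in A_\nu^\times$ is a square precisely when $\gamma^3\equiv 1$ or $5\bmod 8A_\nu$ (equivalently $\gamma$ lies in the list of \pref{eq:3}), while an arbitrary element $2^k\gamma$ with $\gamma$ a unit is a square iff $k$ is even and $\gamma$ is a square. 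The threshold $8A_\nu=\mfrak{p}^{3}=\mfrak{p}^{2e+1}$ is exactly what forces the ``$\geq 3$'' hypotheses: in each case I will choose $(u,w)$ so that one monomial of $g$ (or one additive combination of two monomials) becomes $2^{(\text{even})}$ times a prescribed unit, and arrange all remaining monomials to have valuation at least $3$ higher, so that they are invisible to the mod-$8A_\nu$ test and cannot spoil squareness.

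For case~(1) I use the point at infinity. If $\nu(b_1)$ is even and $\beta_1^3\equiv 1,5\bmod 8A_\nu$, then by the criterion $b_1=2^{\nu(b_1)}\beta_1$ is itself a square in $K_\nu^\times$, and $(1:\sqrt{b_1}:0)$ solves $v^2=g(1,0)=b_1$; the case $\nu(b_2)$ even is symmetric, giving $(0:\sqrt{b_2}:1)$. For case~(2), where $\nu(a)<\nu(b_1),\nu(b_2)$ with both gaps $\geq 3$, I take $u=w=1$, so that
\[
 g(1,1)=b_1+a+b_2=2^{\nu(a)}\bigl(\alpha+2^{\nu(b_1)-\nu(a)}\beta_1+2^{\nu(b_2)-\nu(a)}\beta_2\bigr).
\]
The bracketed unit is $\equiv\alpha\bmod 8A_\nu$, hence a square iff $\alpha$ is; together with $\nu(a)$ even this makes $g(1,1)$ a square, yielding $(1:\sqrt{g(1,1)}:1)$.

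The substantial case is~(3). Relabel so that $\nu(b_2)=2m$ is even, and aim to make the $au^2w^2$ and $b_2w^4$ terms equal in valuation so that their leading coefficients add to $\alpha+\beta_2$. Since $\nu(a)$ is even, exactly one of the two scalings keeps the exponents non-negative: if $\nu(b_2)\geq\nu(a)$ take $w=1$ and $u=2^{s}$ with $s=m-\nu(a)/2\ge 0$, whereas if $\nu(b_2)\le\nu(a)$ take $u=1$ and $w=2^{\nu(a)/2-m}$. In the first sub-case one computes $au^2=2^{2m}\alpha$ and $b_2=2^{2m}\beta_2$, so
\[
 g(2^{s},1)=2^{2m}\bigl((\alpha+\beta_2)+2^{\,\nu(b_1)+\nu(b_2)-2\nu(a)}\beta_1\bigr),
\]
the key identity being $\nu(b_1)+4s-2m=\nu(b_1)+\nu(b_2)-2\nu(a)$. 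The hypothesis $\nu(b_1)+\nu(b_2)-2\nu(a)\geq 3$ makes the bracket $\equiv\alpha+\beta_2\bmod 8A_\nu$ (and the hypothesis $(\alpha+\beta_2)^3\equiv 1,5$ forces $\alpha+\beta_2$ to be a unit), hence a square; as $2m$ is even, $g(2^s,1)$ is a square and $(2^s:\sqrt{g(2^s,1)}:1)\in C(K_\nu)$. The second sub-case is identical after interchanging $u$ and $w$.

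I expect the main obstacle to be the valuation bookkeeping of case~(3): verifying that the leftover $b_1u^4$ term lands at relative valuation exactly $\nu(b_1)+\nu(b_2)-2\nu(a)$ above the combined term, so that the hypothesis matches the threshold $2e+1=3$, and confirming that the choice of which variable to scale always keeps the exponents non-negative. The ``in particular'' clauses for $a,b_i\in\bb{Z}_2$ then follow immediately from the sharper fact, also noted in Section~\ref{secLocal}, that a unit of $\bb{Z}_2^\times$ is a square in $K_\nu$ iff it is $\equiv 1\bmod 4$.
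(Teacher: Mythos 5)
Your proof is correct and takes essentially the same approach as the paper's: case (1) via the points $(1:\sqrt{b_1}:0)$ or $(0:\sqrt{b_2}:1)$, case (2) via $u=w=1$, and case (3) by scaling one variable by a power of $2$ so that two terms of $g$ align at an even valuation while the leftover term sits at relative valuation $\nu(b_1)+\nu(b_2)-2\nu(a)\geq 3$, after which the mod-$8A_\nu$ square criterion (Hensel, $e=1$) finishes. Your explicit sub-case split in case (3), choosing which of $u,w$ to scale so the exponent stays non-negative, is in fact slightly more careful than the paper's single choice $u=1$, $w=2^\mu$ (whose displayed normalization $\nu(b_1)-\nu(b_2)=2\mu$ appears to be a slip for $\nu(b_1)-\nu(a)=2\mu$), but the argument is the same.
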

 \begin{rmk}
  If $K$ is a number field for which $2$ splits, then Lemma 3.3 in \cite{AlvaroMax}
  applies. If $K$ is a quadratic number field for which $2$ is ramified, then the considerations must be carried out in $\bb{Z}/32\bb{Z}$.
 \end{rmk}
  \begin{proof}
  The first condition is equivalent to $b_1$ or $b_2$ being a square, see \pref{eq:3} above, in which case, we take $(u,w)=(1,0)$ or $(0,1)$ 
  respectively. For the second case, take $u=w=1$.  For the third case, by symmetry between the $b_i$'s, 
  we may suppose that $\nu(b_1)$ is even. Let $\nu(b_1)-\nu(b_2)=2\mu$.
  Then take $u=1$ and $w=2^\mu$. It follows that
  \[
   b_1u^4+au^2v^2+b_2w^4=2^{\nu(b_1)}\beta_1+\alpha\pi^{\nu(a)+2\mu}+\beta_2 2^{\nu(b_2)+4\mu}
                        =2^{\nu(b_1)}\left(\beta_1+\alpha+\beta_2 2^{\nu(b_1)+\nu(b_2)-2\nu(a)}\right).
  \]
  Using $\zeta_3$ as a generator of $K_\nu$ over $\bb{Q}_2$, we observe that although $1+4\zeta_3$ and $5+4\zeta_3$ reduce to
  $1 \bmod 4A_\nu$, they are not in $\bb{Z}_2$. Hence, by restricting to $\bb{Z}_2$, $1\bmod 4A_\nu$ lifts to 1 or $5\bmod 8A_\nu$,
  upon which the cube can be omitted as well.
 \end{proof}
 
Consider the case where $a=0$.
 Then equation \pref{eq:2} reduces to 
 \begin{equation}\label{eq:5}
  C:v^2=b_1u^4+b_2w^4
 \end{equation}
where $b=b_1b_2$.  We also observe that 3 divides 
the conductor of $E$ if and only if $3\mid b$. Thus the case $p=3$ can be removed if we assume $3\nmid b$. Since $a=0$,
the assumptions of lemmas~\ref{lma3.25}~and~\ref{lma3.27} cannot be met, but similar methods can still be applied to equation \pref{eq:5} directly.

We now use this to prove the following replacement of lemmas \ref{lma3.25}, \ref{lma3.27}, and \ref{lma3adic}. 
\begin{lma}\label{lmapadic}
Let $K$ be a number field, and let $\nu$ be a non-archimedian valuation on $K$ such that $|k_\nu|=q$ is odd.
Suppose also that $a=0$ and $\nu(b)>0$. Let $b_1=\pi^{\nu(b_1)}\beta_1$ and $b_2=\pi^{\nu(b_2)}\beta_2$ where $\beta_1,\beta_2\in K_\nu^\times$.
Then $C(K_\nu)\neq\emptyset$ if and only if one of the following holds
\begin{enumerate}
 \item $\nu(b_1)$ is even and $\chi(\beta_1)=1$ or $\nu(b_2)$ is even and $\chi(\beta_2)=1$. In particular,
       if $p$ is inert and $b_1,b_2\in\bb{Z}_p^\times$, then $\chi(\beta_1)=\chi(\beta_2)=1$ holds automatically.
       
 \item Both $\nu(b_1)$ and $\nu(b_2)$ are even and $\nu(b_1)\equiv\nu(b_2)\bmod 4$.
 
 \item Both $\nu(b_1)$ and $\nu(b_2)$ are odd, $\nu(b_1)\equiv\nu(b_2)\bmod 4$ , and $\chi_4(-\beta_2/\beta_1)=1$. In particular
      \begin{enumerate}
       \item if $p$ splits or is totally ramified and $p\equiv 3\bmod 4$, then $\chi_4(-\beta_2/\beta_1)=1$ if and only if $\chi(\beta_1)=-\chi(\beta_2)$,
       \item if $p$ is inert, $p\equiv 1\bmod 4$, and $b_1,b_2\in \bb{Z}_{p}$, then $\chi_4(-\beta_2/\beta_1)=1$ if and only if $\chi(\beta_1)=\chi(\beta_2)$,
       \item if $p$ is inert, $p\equiv 3\bmod 4$, and $b_1,b_2\in \bb{Z}_{p}$, then $\chi_4(-\beta_2/\beta_1)=1$ holds automatically.
      \end{enumerate}
\end{enumerate}
\end{lma}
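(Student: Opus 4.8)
The plan is to translate $C(K_\nu)\neq\emptyset$ into the question of when $z=b_1u^4+b_2w^4$ is a nonzero square in $A_\nu$ for some projective point $(u:w)$, and then to apply the square-detection criterion recalled at the start of Section~\ref{secLocal} (namely $z\in K_\nu^\times$ is a square iff $\nu(z)$ is even and $\chi(z/\pi^{\nu(z)})=1$). Writing $s=\nu(b_1)$, $t=\nu(b_2)$ and scaling so that $\min(\nu(u),\nu(w))=0$, say $u=\pi^iu_0$, $w=\pi^jw_0$ with $u_0,w_0\in A_\nu^\times$, the two monomials acquire valuations $V_1=s+4i$ and $V_2=t+4j$. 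First I would split on whether $V_1\neq V_2$ or $V_1=V_2$.

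When $V_1\neq V_2$, say $V_1<V_2$, then $\nu(z)=V_1$ and $z/\pi^{V_1}\equiv\beta_1u_0^4\pmod{\mfrak p}$, so (as $u_0^4$ is a square) $z$ is a square iff $s$ is even and $\chi(\beta_1)=1$; symmetrically with $b_2$. These are exactly the solutions of condition~1, realized concretely by the points $(1:0)$ and $(0:1)$. The ``in particular'' claim for inert $p$ follows from the standard fact that every element of $\bb{F}_p^\times$ is a square in $\bb{F}_{p^2}=k_\nu$, since $p-1\mid(p^2-1)/2$.

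The case $V_1=V_2$ forces $s\equiv t\pmod 4$, which is the source of the congruence-mod-$4$ hypotheses in conditions 2 and 3. Assuming WLOG $s\ge t$ and taking $u=u_0$, $w=\pi^{(s-t)/4}w_0$, the value becomes $z=\pi^s(\beta_1u_0^4+\beta_2w_0^4)$, where $s$ is even or odd according to the common parity of $s,t$. If both $s,t$ are even, it suffices to choose units $u_0,w_0$ making $\beta_1u_0^4+\beta_2w_0^4$ a unit square; taking $w_0=1$ this is precisely the content of Lemma~\ref{lma4} applied to $\chi(\beta_1x^4+\beta_2)$, followed by Hensel's lemma, giving condition~2 (with the small residue fields $q=3,5$ handled by Lemma~\ref{lma1a} and its accompanying remark). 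Conversely such a solution needs nothing beyond $s\equiv t\pmod 4$, so condition~2 is both necessary and sufficient here.

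The delicate case is both $s,t$ odd, which I expect to be the main obstacle. Now $s$ is odd, so $z=\pi^s r$ with $r=\beta_1u_0^4+\beta_2w_0^4$ can be a square only if $\nu(r)$ is odd, in particular $r\equiv 0\pmod{\mfrak p}$; setting $w_0=1$ this says $u_0^4\equiv-\beta_2/\beta_1$ is a quartic residue, i.e.\ $\chi_4(-\beta_2/\beta_1)=1$, which proves necessity. For sufficiency I would run a two-step Hensel argument: pick a lift $u_0$ of a fourth root of $-\beta_2/\beta_1$, write $\beta_1u_0^4+\beta_2=\pi\gamma$, and then perturb the lift $u_0\mapsto u_0+\pi\theta$; since $q$ is odd the residue of $\gamma$ transforms by $\bar\gamma\mapsto\bar\gamma+4\beta_1u_0^3\bar\theta$, which sweeps out all of $k_\nu$, so $\theta$ can be chosen to make $\gamma$ a unit square. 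Then $z=\pi^{s+1}\gamma$ with $s+1$ even and $\chi(\gamma)=1$, and a final application of the square criterion produces $v$. The translation of $\chi_4(-\beta_2/\beta_1)=1$ into the stated $\chi$-conditions (a)--(c) is then routine finite-field bookkeeping: for $q=p$ with $p\equiv3\bmod4$ the fourth powers coincide with the squares; for inert $p$ one uses that $\bb{F}_p^\times\subset k_\nu^\times$ lies in the fourth powers exactly when $4\mid p+1$, i.e.\ $p\equiv3\bmod4$, and otherwise reduces to a square condition on $\bb{F}_p$. Finally I would note that these three cases are exhaustive: if $s\not\equiv t\pmod4$ no point of type $V_1=V_2$ exists and only condition~1 can hold, so the stated list is complete.
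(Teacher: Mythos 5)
Your skeleton is the same as the paper's: compare the two monomial valuations; unequal valuations force condition~1 (and conversely $(1:0)$, $(0:1)$ realize it); equal valuations force $\nu(b_1)\equiv\nu(b_2)\bmod 4$; in the even case reduce to making $\beta_1x^4+\beta_2$ a unit square via Lemma~\ref{lma4} and Hensel; in the odd case show the unit part must vanish mod $\mfrak{p}$, i.e.\ $\chi_4(-\beta_2/\beta_1)=1$. Your sufficiency argument in the odd case (perturb a fourth-root lift by $u_0\mapsto u_0+\pi\theta$ so that the depth-one coefficient sweeps $k_\nu$ and can be made a nonzero square) is a legitimate variant of the paper's trick, which instead solves $\beta_1x^4+\beta_2=\pi$ exactly by Hensel so that the value is $\pi^{\nu(b_1)+1}$; both work. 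Your derivations of (a)--(c) via the power-residue exponent computation in $\bb{F}_{p^2}^\times$ ($x\in\bb{F}_p^\times$ is a fourth power in $\bb{F}_{p^2}$ iff $\chi_p(x)^{(p+1)/2}=1$) are correct and arguably cleaner than the paper's case analysis through the factorizations $(x^2-c)(x^2-d)$ and $(x^2-cx+d)(x^2+cx+d)$.

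There is, however, one genuine gap: the residue field $\bb{F}_5$ in the even case. You dispatch small $q$ with ``Lemma~\ref{lma1a} and its accompanying remark,'' but Lemma~\ref{lma1a} concerns $q=3$ only, and the remark you invoke says explicitly that $q=5$ is \emph{problematic}; the remark after Lemma~\ref{lma4} lists the exceptions $(c,d)=(1,2),(2,3),(3,2),(4,3)$. These exceptions actually occur in your setting: in the regime where neither $b_i$ is a square, both $\beta_1,\beta_2$ reduce to nonsquares mod $5$, i.e.\ to $2$ or $3$, so the pairs $(2,3)$ and $(3,2)$ arise; and since every unit fourth power reduces to $1$ in $\bb{F}_5$, one then has $\beta_1u_0^4+\beta_2w_0^4\equiv\beta_1+\beta_2\equiv 0\bmod\mfrak{p}$ for \emph{all} choices of units, so no choice makes the value a unit square and your route produces no point --- yet condition~2 asserts solvability with no extra hypothesis, so the ``if'' direction is unproven there. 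The paper closes exactly this hole with a dedicated deeper-level construction: write $\beta_1+\beta_2=\pi^n\vep$, use Hensel to solve $x^4=1+\pi^na$ for any $a\in A_\nu$, and choose $a=\beta_1^{-1}(1-\vep)$ if $n$ is even and $a=\beta_1^{-1}(\pi-\vep)$ if $n$ is odd, so that $\beta_1x^4+\beta_2=\pi^n(\vep+\beta_1a)$ becomes $\pi^n$ or $\pi^{n+1}$, an even power of $\pi$ times a square. You need this (or an equivalent argument) for $q=5$. A further minor looseness, shared with the paper: Lemma~\ref{lma4} as stated requires $q>7$, so $q=7$ rests on the computational check mentioned inside its proof rather than on the lemma's statement.
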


\begin{proof}
If $b_1$ or $b_2$ is a square, simply take $(u,w)=(1,0)$ or $(u,w)=(0,1)$ respectively.
Note that $b_i$ is a square if and only if $\nu(b_i)$ is even and $\chi(\beta_i)=1$.
If $p$ splits, and $b_i\in \bb{Z}_p^\times$, then $b_i$ always has square root in $A_\nu^\times$.

Suppose now that neither $b_1$ nor $b_2$ is a square.
Let $u=\pi^{\nu(u)}\vep, w=\pi^{\nu(w)}\delta$ where $\vep,\delta\in A_\nu^\times$. We thus have
\[
  \nu(b_1u^4+b_2w^4)\geq\min(\nu(b_1)+4\nu(u),\nu(b_2)+4\nu(w)).
\]
If $C(K_\nu)\neq\emptyset$, then $b_1u^4+b_2w^4$ must be a square in $K_\nu$, 
and $\nu(b_1)+4\nu(u)\neq\nu(b_2)+4\nu(w)$ implies that $b_1$ or $b_2$ must be a square, giving a contradiction.
So equality must hold, meaning that $\nu(b_1)\equiv\nu(b_2)\bmod 4$.
Additionally, if $\nu(b_1)$ is odd, then $\beta_1\vep^4+\beta_2$ must have positive valuation,
hence $\beta_1\vep^4+\beta_2\equiv 0\bmod \mfrak{p}$, which is equivalent to $\chi_4(-\beta_2/\beta_1)=1$.

Conversely if $\nu(b_1)\equiv\nu(b_2)\bmod 4$, then take $u=\vep$ and $w=\pi^\mu$ where 
$4\mu=\nu(b_1)-\nu(b_2)$. Then we have 
\begin{equation}\label{eq:6}
 b_1u^4+b_2w^4=\pi^{\nu(b_1)}(\beta_1\vep^4+\beta_2).
\end{equation}
If $\nu(b_1)$ is even and $\nu(6)=0$, then we obtain $C(K_\nu)\neq\emptyset$ by applying Lemma~\ref{lma4}.
In the case where $q=3$, it is easy to see that Lemma~\ref{lma1a} remains valid if we replace
$x^2$ by $x^4$, but the exceptional case of $\nu(\beta_1-1)>0$ and $\nu(\beta_2+1)>0$ can be dealt with
by reversing $b_1$ and $b_2$ if necessary. This trick does not work for $q=5$, because $(2,3)$ and $(3,2)$
are both among the exceptions listed at the end of the proof of Lemma~\ref{lma4}, however these cases can 
be dealt with as follows. In both cases $\beta_1+\beta_2\equiv 0\bmod \pi$, so let $\beta_1+\beta_2=\pi^n\vep$,
where $\vep\in A_\nu^\times$. If $a\in A_\nu$, then by Hensel's lemma there exists $x\in A_\nu$
such that $x^4=1+\pi^na$, thus
\[
\beta_1x^4+\beta_2=\beta_1(1+\pi^na)+\beta_2=\pi^n(\vep +\beta_1a).
\]
If $n$ is even, then take $a=\beta_1^{-1}(1-\vep)$. If $n$ is odd then take $a=\beta_1^{-1}(\pi-\vep)$.

If $\nu(b_1)$ is odd we note that $\beta_1x^4+\beta_2-\pi$ reduces to $\beta_1x^4+\beta_2\bmod \mfrak{p}$.
So if $\beta_1x^4+\beta_2\equiv 0\bmod \mfrak{p}$ has a solution, then we can lift by Hensel's lemma to
a solution of $\beta_1x^4+\beta_2-\pi=0$ in $A_\nu$, and the right hand side of $\pref{eq:6}$ is $\pi^{\nu(b_1)+1}$,
which is clearly a square. It remains to show that the condition $\chi_4(-\beta_2/\beta_1)=1$ 
can be simplified in the special cases listed.

If $p$ splits or is totally ramified and $p\equiv 3\bmod 4$, then $-1$ is not a square,
and every square in $\bb{F}_q^\times$ is also a fourth power. 
Since $-1$ is not a square and $\vep^4$ ranges over all squares as $\vep$ is varied, then
there exists $\vep$ satisfying $\beta_1\vep^4+\beta_2\equiv 0\bmod\mfrak{p}$ if and
only if $\chi(\beta_1)=\chi(-\beta_2)=-\chi(\beta_2)$.

If $p$ is inert and $b_1,b_2\in\bb{Z}_p$, then $f(x)=x^4+\beta_2/\beta_1\in\bb{F}_p[x]$.
The condition $\chi_4(-\beta_2/\beta_1)=1$ is equivalent to $f(x)$ splitting
over $\bb{F}_{p^2}$. Note that if $\alpha$ is a root of $f(x)$, then so is $-\alpha$.
This observation leads to the consideration of two cases:
\begin{enumerate}
 \item $f(x)=(x^2-c)(x^2-d)$ over $\bb{F}_p$, and
 \item $f(x)=(x^2-cx+d)(x^2+cx+d)$ over $\bb{F}_p$.
\end{enumerate}
Only the first case needs to be considered if $f(x)$ has a root in $\bb{F}_p$, but if $f(x)$
has no roots in $\bb{F}_p$, it is possible that
\[
 f(x)=(x-\alpha_1)(x+\alpha_1)(x-\alpha_2)(x+\alpha_2)
\]
and neither $\alpha_1^2$ nor $\alpha_2^2$ is in $\bb{F}_p$, which is covered by the second case.
In the first case we have
\[
 f(x)=x^4-(c+d)x^2+cd,
\]
so $d=-c$ and $\beta_2/\beta_1=cd=-d^2$, hence $\chi_4(-\beta_2/\beta_1)=1$
is equivalent to $-\beta_2/\beta_1$ being a square. In the second case we have
\[
 f(x)=x^4+(2d-c^2)x^2+d^2,
\]
so $\chi_4(-\beta_2/\beta_1)=1$ is equivalent to $\beta_2/\beta_1=d^2$ and $2d=c^2$
for some $c,d\in\bb{F}_p$. If in addition $p\equiv 1\bmod 4$, 
then $-\beta_2/\beta_1$ and $\beta_2/\beta_1$ are either both squares or both non-squares, 
hence $\chi_4(-\beta_2/\beta_1)=1$ is equivalent to $\chi(\beta_2)=\chi(\beta_1)$.
On the other hand if $p\equiv 3\bmod 4$, then one of $-\beta_2/\beta_1$ and $\beta_2/\beta_1$ 
is a square and the other is not. If the second of these occurs, 
then say $\beta_2/\beta_1=d^2$, and since exactly one of
$\pm 2d$ is a square, then by replacing $d$ by $-d$ if necessary we can obtain $2d-c^2=0$.
Therefore if $p$ is inert and $p\equiv 3\bmod 4$, 
then  $\chi_4(-\beta_2/\beta_1)=1$ in all cases.
\end{proof}

  \begin{lma}\label{lma2adic-zero}
  Let $K=\bb{Q}(\zeta_3)$, let $\nu$ denote the valuation extending the 2-adic valuation, and take $\pi=2$ to be the uniformizer.
  Suppose that $a=0$, and let $b_1=2^{\nu(b_1)}\beta_1$, and $b_2=2^{\nu(b_2)}\beta_2$ where $\alpha,\beta_1,\beta_2\in K_\nu^\times$.
  Then $C(K_\nu)\neq\emptyset$ if and only if one of the following conditions holds.
  \begin{enumerate}
   \item $\nu(b_1)$ or $\nu(b_2)$ is even and for that $b_i$ the corresponding $\beta_i^3$ reduces to 1 or 5 $\bmod\,8A_\nu$.
         In particular if $b_1,b_2\in\bb{Z}_2$, then the condition on $\beta_i$ can be simplified to $\beta_i\equiv 1\bmod 4A_\nu$.
   \item $\nu(b_1)$ is even, $\nu(b_1)\equiv \nu(b_2)\bmod 4$, and $\beta_1x^4+\beta_2\equiv 2^k s\bmod 32A_\nu$ has a solution 
         for $k=0$, 2, 4, or 6 and some square $s$.
         In particular, if $b_1b_2\in\bb{Z}_2$, then such a solution exists if and only if one of the following is true:
         \begin{enumerate}
          \item $\beta_1,\beta_2$ are conjugates $\bmod\,8A_\nu$ and $(x-\beta_1^3)(x-\beta_2^3)$ is congruent $\bmod\,8A_\nu$ to one of the following
                polynomials
                \[
                 x^2+2x+1,\quad x^2+3,\quad x^2-2x+5,\quad x^2+4x+7;
                \]
                if $b_1,b_2\in\bb{Z}_2$, then this simplifies to $-\beta_1\equiv-\beta_2\equiv s\bmod 8A_\nu$ for some square $s$.
          \item $\beta_1+\beta_2$ is congruent to an integer $\bmod\,16A_\nu$, $\beta_1\beta_2$ is congruent to an integer $\bmod\,32A_\nu$,
                and $(x-\beta_1^3)(x-\beta_2^3)$ is congruent $\bmod\,8A_\nu$ to one of the following polynomials
                \[
                 x^2+3,\quad x^2+4x+3 ,\quad x^2+7,\quad x^2+4x+7;
                \]
                if $b_1,b_2\in\bb{Z}_2$, then this simplifies to $\beta_1+\beta_2\equiv 0\bmod 4A_\nu$.
          \end{enumerate}
     \item $\nu(b_1)$ is even, $\nu(b_1)\equiv \nu(b_2)+ 2\bmod 4$, and $x^2-(\beta_1+4\beta_2)\bmod 8A_\nu$ or $x^2-(4\beta_1+\beta_2)\bmod 8A_\nu$
           has a solution. In particular
         \begin{enumerate}
          \item if $b_1,b_2\in\bb{Z}_2$, then such a solution exists if and only if $\beta_1$ both $\beta_2$ are squares, or
          \item if $b_1,b_2\not\in\bb{Z}_2$ but $b_1b_2\in\bb{Z}_2$, then a solution exists if and only if 
               $(x-\beta_1^3)(x-\beta_2^3)$ is congruent mod $\bmod\,8A_\nu$ to one of the following polynomials
               \[
                x^2-2x+1,\quad x^2+4x+3,\quad x^2+2x+5,\quad x^2+7.
               \]
         \end{enumerate}
         
   \item $\nu(b_1)$ is odd, $\nu(b_1)\equiv \nu(b_2)\bmod 4$, and $\beta_1x^4+\beta_2\equiv 2^k s\bmod 32A_\nu$ has a solution 
         for $k=1$, 3, or 5, and some square $s$.
         In particular if $b_1b_2\in\bb{Z}_2$, then such a solution exists if and only if $\beta_1\beta_2$ is congruent
         to an integer $\bmod\,32A_\nu$ and $(x-\beta_1^3)(x-\beta_2^3)$ is congruent $\bmod\,8A_\nu$ to one of the following 
         polynomials
         \[
             x^2-2x+1,\quad x^2+3,\quad x^2-2x+5,\quad x^2+7;
         \]
         if $b_1,b_2\in\bb{Z}_2$, then this conditions simplifies to $\beta_1+\beta_2\equiv 0\text{ or }2\mod 8A_\nu$.
    \end{enumerate}
 \end{lma}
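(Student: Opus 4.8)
The plan is to reduce the solvability of $C$ over $K_\nu$ to a finite congruence problem and then settle that problem with the square- and fourth-power criteria of \pref{eq:3}. Since $a=0$, a point of $C(K_\nu)$ is a projective $(u:w)$ together with $v$ satisfying $v^2=N$, where $N:=b_1u^4+b_2w^4$ as in \pref{eq:5}; so the task is to decide for which $(u:w)$ the element $N$ is a nonzero square in $K_\nu$. After scaling the coordinate so that $\min\{\nu(u),\nu(w)\}=0$, I would first treat the two degenerate points $(u:w)=(1:0)$ and $(0:1)$, where $N=b_1$ and $N=b_2$ respectively; by \pref{eq:3} such an $N$ is a square exactly when $\nu(b_i)$ is even and $\beta_i^3\equiv 1,5\bmod 8A_\nu$, which is precisely condition (1) (specializing to $\beta_i\equiv 1\bmod 4A_\nu$ when $b_i\in\bb{Z}_2$).

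For the remaining points both $u$ and $w$ are units times a power of $2$. Because a unit fourth power is $\equiv 1,\zeta_3,\zeta_3^2\bmod 8A_\nu$, I can absorb $u^4$ and $w^4$ into the unit parts and study $N=2^{e_1}\beta_1'+2^{e_2}\beta_2'$ with $e_i=\nu(b_i)+4\nu(\cdot)\equiv\nu(b_i)\bmod 4$ and $\beta_i'$ a unit class. Exactly as in the valuation analysis of Lemma~\ref{lmapadic}, the residues $\nu(b_1),\nu(b_2)\bmod 4$ govern the outcome: when they agree one can align $e_1=e_2$, giving condition (2) if $\nu(b_i)$ is even and condition (4) if $\nu(b_i)$ is odd (the odd case forcing the inner sum to supply an extra factor of $2$, which is the origin of the odd exponents $k=1,3,5$); when they differ by $2$ the minimal gap is $|e_1-e_2|=2$, giving condition (3); and when the gap is at least $3$ the higher term vanishes modulo $8A_\nu$, so $N$ is a square only if the lower term already is, returning us to condition (1).

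The substance of the lemma is converting ``$N$ is a square'' into the displayed congruences. The feature that makes this harder than the odd-residue-field Lemma~\ref{lmapadic} is that square detection among units needs precision modulo $8A_\nu$, and when the two summands have equal valuation their sum can acquire extra $2$-adic valuation; to recover the square class of the result one must then work modulo $32A_\nu$. This is the source of the auxiliary data $2^ks\bmod 32A_\nu$ in conditions (2) and (4), with $k$ even or odd according to the parity of $e_1$, and of the explicit lists of quartic classes $(x-\beta_1^3)(x-\beta_2^3)\bmod 8A_\nu$, which simply enumerate the pairs of unit classes whose scaled combination lands in the set of squares. In condition (3) the gap $2$ collapses $N$ to $2^{e_1}(\beta_1'+4\beta_2')$, so only $\beta_1+4\beta_2$ (or $4\beta_1+\beta_2$) modulo $8A_\nu$ need be tested for being a square, which is why that case reads off a single quadratic $x^2-(\beta_1+4\beta_2)$.

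For the ``only if'' direction I would run the valuation estimate $\nu(N)\ge\min(e_1,e_2)$, with equality unless $e_1=e_2$, to show that any square $N$ not covered by (1) forces $\nu(b_1)\equiv\nu(b_2)$ or $\nu(b_2)+2\bmod 4$, and then invoke the mod-$8A_\nu$/mod-$32A_\nu$ reductions to pin down the admissible unit classes. I expect the main obstacle to be twofold and largely computational: the $2$-adic carries generate many subcases, and one must also verify that opposite-parity valuation configurations contribute no solutions beyond condition (1), together with extracting the simpler $b_1,b_2\in\bb{Z}_2$ statements from the general $A_\nu$ versions. The cleanest route is to phrase each case as a finite congruence condition modulo $32A_\nu$ and confirm the resulting finite list of unit classes directly in Sage or Magma, as was done for the strongest $p=2$ results quoted in the introduction.
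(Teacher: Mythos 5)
Your proposal matches the paper's proof essentially step for step: condition (1) is handled by taking $(u,w)=(1,0)$ or $(0,1)$ and \pref{eq:3}; then, assuming neither $b_i$ is a square, the valuation bound $\nu(b_1u^4+b_2w^4)\geq\min(\nu(b_1)+4\nu(u),\nu(b_2)+4\nu(w))$ (with a gap of at least $3$ forcing some $b_i$ to be a square) restricts the symmetric residue $r=\nu(b_2)-\nu(b_1)\bmod 4$ to the cases underlying conditions (2)--(4), after which Hensel's lemma reduces everything to a finite congruence check modulo $32A_\nu$ performed in Sage, exactly as the paper does. The only cosmetic difference is that the paper excludes the odd-gap configurations $r=\pm 1$ by a short direct appeal to the list of unit squares in \pref{eq:3} before computing, whereas you fold that opposite-parity verification into the machine computation; both dispositions are at the same level of rigor.
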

 
 \begin{rmk}
   Note that $b_1,b_2\in\bb{Z}_2$ and $\beta_1+\beta_2\equiv 0\bmod 4A_\nu$ implies that $\beta_1$ or $\beta_2\equiv 1\bmod 4A_\nu$,
 which is already covered in the first case.
 \end{rmk}

 \begin{proof}
The first case is as in Lemma~\ref{lma2adic-nonzero}. If the first case does not hold, then neither $b_1$ nor $b_2$ is a square in $K_\nu$,
which we will now assume. Let $u=2^{\nu(u)}\vep, w=2^{\nu(w)}\delta$ where $\vep,\delta\in A_\nu^\times$. We thus have
\[
  \nu(b_1u^4+b_2w^4)\geq\min(\nu(b_1)+4\nu(u),\nu(b_2)+4\nu(w)).
\]
If $C(K_\nu)\neq\emptyset$, then $b_1u^4+b_2w^4$ must be a square in $K_\nu$, and $|\nu(b_1)-\nu(b_2)+4(\nu(u)-\nu(w))|\geq 3$ 
implies that $b_1$ or $b_2$ must be a square giving a contradiction.
Let $\nu(b_2)-\nu(b_1)=4\mu+r$, where $r$ is a symmetric residue $\bmod\, 4$. Then we must have $\nu(u)-\nu(w)=\mu$ and in fact we may
assume that $\nu(w)=0$, since changing both $u$ and $w$ by a factor of $2$ changes $b_1u^4+b_2w^4$
by a factor of $2^4$, leaving the parity of $\nu(b_1u^4+b_2w^4)$ unchanged.
Thus we have reduced our considerations to the following cases
\[
 b_1u^4+b_2w^4=\begin{cases}
                2^{\nu(b_1)}(\beta_1\vep^4+\beta_2\delta^4 2^r) &\text{ if }r=0,1,2\\
                2^{\nu(b_1)-1}(\beta_1\vep^4 2+\beta_2\delta^4) &\text{ if }r=-1
               \end{cases}
\]
If $r=-1$ and $\nu(b_1)-1$ is even, then $\beta_1\vep^4 2+\beta_2\delta^4$ is in $A_\nu^\times$ and must be a square, 
which is not possible by \pref{eq:3}. On the other hand if $r=-1$ and $\nu(b_1)-1$ is odd, then 
$\nu(\beta_1\vep^4 2+\beta_2\delta^4)=0$,  so we cannot get an extra factor of $2$. If $\nu(b_1)$ is even and $r> 0$, 
then $\beta_1\vep^4+\beta_2\delta^4 2^r$ is in $A_\nu^\times$ and must be a square,
thus $r=1$ cannot occur by \pref{eq:3}, meaning that $\nu(b_2)$ is even also.
If $\nu(b_1)$ is odd and $r>0$, then we must have $\nu(\beta_1\vep^4+\beta_2\delta^4 2^r)>0$,
meaning that $r=0$, hence $\nu(b_1)\equiv\nu(b_2)\bmod 4$.
From these considerations it follows that $r=-1$ and $r=1$ never occur, while $r=2$ can only occur when $\nu(b_1)$ is
even, and $r=0$ can occur when $\nu(b_1)$ is even or odd. Hensel's lemma then allows the problem to be reduced to a
finite amount of computation, which Sage \cite{sagemath}.
 \end{proof}

 \begin{rmk}
   It is interesting to note that in the case where $\nu(b_1)$ and $\nu(b_2)$ are both even and congruent mod $4$.
   It is easy to construct a solution to $\beta_1\vep^4+\beta_2$ under the assumption
   $-\beta_1\equiv-\beta_2\equiv s\bmod 8A_\nu$ where $s$ is a square. Since $\zeta_3$ is a fourth power, we have
   \[
    \beta_1\zeta_3+\beta_2\equiv \beta_1(\zeta_3+1)\equiv -\zeta_3^2\beta_1\bmod 8A_\nu.
   \]
 \end{rmk}
 
\section{Bounds on rank}\label{SecRankBounds}

In this section we apply lemmas~\ref{lmapadic}~and~\ref{lma2adic-zero} to determine the
2-Selmer rank for elliptic curves of the type
\[
 E:y^2=x^3+bx,
\]
by applying the method of descent by 2-isogeny as described in Chapter X, Section 4 of \cite{Silverman};
see \pref{eq:selrk} above for the definition of the 2-Selmer rank. While the previous section
dealt with points on homogeneous spaces in a fairly general context, here we must be attentive
to the fact that it is the spaces
\[
 C_{b_1,b_2}:v^2=b_1u^4+b_2v^4
\]
with $b_1b_2=-4b$ that determine $S^{(\phi)}(E/K)$ while those with $b_1b_2=-b$ determine $S^{(\hat\phi)}(E'/K)$,
where
\[
 E':y^2=x^3-4bx.
\]
In particular, we note that $C_{b_1,b_2}$ as defined here, differs from Silverman's $C'_d$ or $C_d$
by a factor of $b_1=d$ after dehomogenizing with $u=1$. The point $(0,0)$ is a 2-torsion point on $E$ and on $E'$,
which corresponds with $b$ or $-4b$ in $K(S,2)$. Thus the curves $C_{b,1}$ and $C_{-4b,1}$, which always
have a solution, do not contribute to the 2-Selmer rank, since 2-torsion is removed (see the definition of $\SelRk_2(E/K)$
as given by \pref{eq:selrk} above).

For this section we will restrict out attention to imaginary quadratic number fields with class number 1 
such that 2 is inert, in other words $K=\bb{Q}(\sqrt{D})$ where $D=-3,-11,-19,-43,-67,$ or $-163$.
The class number 1 assumption is useful because it is easy to describe $K(S,2)$ in this case,
specifically it is generated by the irreducible elements dividing $b$ and $-4b$,
including units as factors up to squares. For imaginary quadratic number fields there is
only one archimedean place, and there is no trouble there since the completion of $K$ 
at that place is $\bb{C}$. The assumption that 2 is inert is useful mainly because
it yields an isomorphism between $K_\nu$ and $\bb{Q}_2(\zeta_3)$, thus all computer
computations with Sage~\cite{sagemath} can be done in the ring of integers of 
$K=\bb{Q}(\zeta_3)$ mod the ideal $8\Cal{O}_K$.

Several reductions can be made when considering pairs $(b_1,b_2)$. As noted above,
the pairs $(b,1)$ and $(-4b,1)$ can be ignored since they only account for the 2-torsion.
Since $K(S,2)$ is determined up to squares, we may assume that $(b_1,b_2)$ is square-free.
For $D\neq-3$, the only units are $\pm1$, and so if $b\in\bb{Z}$,
then we may assume that $b_1,b_2\in\bb{Z}$ (and thus in $\bb{Z}_p$ for all primes under consideration).
However, we can get away with this even in the case when $D=-3$, since the units $\zeta_3,\zeta_3^2$ 
are fourth powers in $K$, hence factors of $\zeta_3$ can be removed from $b_1,b_2$ by absorbing 
them in the variables $u$ and $w$. Finally, if $b$ is odd, then $C'_2$ in Silverman's notation
must be identified with $C_{8,2b}$, but so long as 2 is inert, Lemma~\ref{lma2adic-zero}
says that $C_{8,2b}$ has no solutions in $K_\nu$ where $\nu$ extends the 2-adic valuation.
It follows that if $b$ is odd then $2$ is not in $S^{\hat\phi}(E'/K)$ for any 
$K$ under consideration, and similarly with $-2$. We therefore consider only the remaining options 
in the tables of this section.

\begin{proof}[Proof of Theorem~\ref{thm-Inert}]
Let $b=\pm \prod_{i}^n p_i$, where each $p_i>2$ is inert,
$p_i\neq p_j$ when $i\neq j$, and let $b_i=\frac{b}{p_i}$ for all $i$.
Fix $r=1,3,5$, or $7$, and let $b\equiv r\bmod 8$.
Let $E_b:y^2=x^3+bx$ and $E'_b:y^2=x^3-4bx$. 
Clearly there is no problem over $\bb{C}$, and 
Lemma~\ref{lmapadic} applies to all odd primes under consideration here since if $5$ is inert then $|k_v|=25>7$.
If $\nu$ extends the $p_i$-adic valuation, since $p_i$ is inert we may assume that $b_1,b_2\in\bb{Z}$ and
that $\nu(b_1)=0$ or $\nu(b_2)=0$, hence $C(K_\nu)\neq\emptyset$.
It remains only to consider the valuation $\nu$ extending the 2-adic valuation. 
The following table summarizes the conditions on which $C_{b_1,b_2}(K_\nu)\neq\emptyset$ as indicated by Lemma~\ref{lma2adic-zero}.
\smallskip

\begin{tabular}{l|llllll}
 $(b_1,b_2)$&$(b_i,p_i)$&$(-b_i,-p_i)$&$(b_i,-4p_i)$&$(-b_i,4p_i)$&$(2b_i,-2p_i)$&$(-2b_i,2p_i)$\\
 \hline
$r=1$&True&True&True&True&True&True\\
$r=3$&True&True&$b_i\equiv 1\bmod 4$&$b_i\equiv 3\bmod 4$&$b_i\equiv 3\bmod 4$&$b_i\equiv 1\bmod4$\\
$r=5$&$b_i\equiv 1\bmod 4$&$b_i\equiv 3\bmod 4$&True&True&False&False\\
$r=7$&True& True&$b_i\equiv 1\bmod 4$&$b_i\equiv 3\bmod 4$&$b_i\equiv 1\bmod4$&$b_i\equiv 3\bmod 4$
\end{tabular}
\smallskip

This table makes it clear that the sizes of $S^{(\phi)}(E/K)$ and $S^{(\hat\phi)}(E'/K)$ do not
depend on the congruences satisfied by the primes $p_i$, but only on $b\equiv r\bmod 8$.
If $r=1$ then the size is maximal. If $r=3$ or $r=7$, then $-1$ is not in $S^{(\phi)}(E/K)$.
If $r=5$, then $2$ is not in $S^{(\phi)}(E/K)$ and $-1$ is not in $S^{(\hat\phi)}(E'/K)$.
Also $2$ is not in $S^{(\hat\phi)}(E'/K)$, since $b$ is odd as observed at the beginning of the section.
Since $\dim_{\bb{F}_2}\frac{E'(K)[\hat\phi]}{\phi(E(K)[2])}=1$ for all curves $E$
under consideration, the result follows immediately.
\end{proof}

In order to prove results involving split primes, it will be useful to have the following two lemmas.

\begin{lma}
 Let $K=\bb{Q}(\sqrt{D})$ where $D=-3,-11,-19,-43,-67,$ or $-163$. Let $p$ be a prime that splits over $K$,
 and let $p=\alpha\wbar{\alpha}$ be a factorization in which $\alpha\Cal{O}_K$ and $\wbar{\alpha}\Cal{O}_K$ are the prime ideals
 above $p$. Then, $(x-\alpha^3)(x-\wbar{\alpha}^3)\bmod 8A_\nu$ must be congruent to one of 
 \[
  x^2\pm 2 x+1,\, x^2+3,\, x^2\pm 2x+5,\, x^2+4x+7,
 \]
 and $(x-\alpha^3)(x+\wbar{\alpha}^3)\bmod 8A_\nu$ must be congruent to one of
 \[
  x^2+7,\, x^2+(4\zeta_3\pm 2)x+5,\, x^2+4x+3,\, x^2+(4\zeta_3\pm 2)x+1.
 \]
\end{lma}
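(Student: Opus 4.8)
The plan is to reduce the statement to a finite computation in $A_\nu/8A_\nu$ by exploiting that $2$ is inert. Since $p$ is odd it is a unit at $\nu$, so $\alpha$ and $\wbar{\alpha}$ lie in $A_\nu^\times$. Because $2$ is inert, $K_\nu/\bb{Q}_2$ is the unramified quadratic extension, the decomposition group at the unique prime above $2$ is all of $\Gal(K/\bb{Q})$, and the nontrivial element of $\Gal(K/\bb{Q})$ — which interchanges $\alpha$ and $\wbar{\alpha}$ — extends to the Frobenius $\frob$ of $K_\nu/\bb{Q}_2$. Hence, inside $K_\nu$ we have $\wbar{\alpha}=\frob(\alpha)$, so writing $\delta=\alpha^3$ the two polynomials are symmetric resp.\ antisymmetric functions of $\delta$ under $\frob$:
\[
 (x-\alpha^3)(x-\wbar{\alpha}^3)=x^2-\tr(\delta)\,x+p^3, \qquad
 (x-\alpha^3)(x+\wbar{\alpha}^3)=x^2+(\frob(\delta)-\delta)\,x-p^3,
\]
using $\alpha^3\wbar{\alpha}^3=(\alpha\wbar{\alpha})^3=p^3$. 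Since $p$ is odd, $p^3\equiv p\bmod 8$, so the constant terms are $p\bmod 8$ and $-p\bmod 8$, which already accounts for the four possible constant terms $\{1,3,5,7\}$ in each list.

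The key reduction is that $\delta=\alpha^3$ is forced to be a principal unit. The residue field is $k_\nu=\bb{F}_4$, whose unit group has order $3$, so $\alpha^3\equiv 1\bmod 2A_\nu$; thus $\delta=1+2\gamma$ for some $\gamma\in A_\nu$, and modulo $8A_\nu$ the element $\delta$ ranges over at most the $16$ principal units, indexed by $\gamma\in A_\nu/4A_\nu$. Crucially I need not determine which of these actually arise from a genuine split prime: it suffices to show that \emph{every} one of the $16$ choices produces a polynomial on the allowed list, since then the conclusion ``$\equiv$ one of the listed polynomials'' holds a fortiori.

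The remaining step is this finite check. Writing $\gamma=a+b\zeta_3$ with $a,b\in\bb{Z}/4\bb{Z}$ and using $\frob(\zeta_3)=\zeta_3^2=-1-\zeta_3$, one computes modulo $8A_\nu$
\[
 \tr(\delta)=2+2(2a-b),\qquad N(\delta)=1+2(2a-b)+4(a^2-ab+b^2),\qquad \frob(\delta)-\delta=-2b(1+2\zeta_3).
\]
In particular the linear coefficient $-2b(1+2\zeta_3)\bmod 8$ takes exactly the values $0,\ 4\zeta_3\pm 2,\ 4$ as $b$ runs through $\bb{Z}/4\bb{Z}$, matching the linear coefficients in the second list, while $\tr(\delta)$ is even with $\tr(\delta)\in\{0,\pm2,4\}\bmod 8$. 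A direct enumeration of the $16$ pairs $(a,b)$ then shows the first polynomial is always one of $x^2\pm 2x+1,\ x^2+3,\ x^2\pm 2x+5,\ x^2+4x+7$ and the second is always one of $x^2+7,\ x^2+(4\zeta_3\pm 2)x+5,\ x^2+4x+3,\ x^2+(4\zeta_3\pm 2)x+1$, with the linear coefficient correlating with the constant term $\pm N(\delta)$ precisely as listed.

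The only genuinely conceptual points are the identification $\wbar{\alpha}=\frob(\alpha)$ in $K_\nu$ (where inertness of $2$ is essential) and the observation that cubing sends every unit into the $16$-element group of principal units mod $8A_\nu$; once these are granted the statement is mechanical. The main obstacle is simply the bookkeeping of the $\bmod\,8A_\nu$ arithmetic in the basis $\{1,\zeta_3\}$ — in particular tracking which coefficients $4\zeta_3\pm 2$ pair with which constant terms — which I would carry out by hand over the $16$ cases or confirm with Sage, consistent with the computational style used elsewhere in the paper.
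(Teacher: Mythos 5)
Your proposal is correct, and although it terminates in the same kind of finite verification modulo $8A_\nu$ as the paper, the route there is genuinely different. The paper's proof writes $\alpha=a+b\zeta_3$ with $a,b\in\bb{Z}_2$, expands the symmetric and antisymmetric combinations of $\alpha^3$ and $\wbar{\alpha}^3$ to get the coefficient formulas $2(a^3+b^3)-3ab(a+b)$ and $-3ab(a-b)(2\zeta_3+1)$, and then has Sage enumerate all residues of $(a,b)$ mod $8A_\nu$; the identification of $\wbar{\alpha}$ with the Frobenius image of $\alpha$ in $K_\nu$ is used silently there. You make that identification explicit and, more importantly, insert a structural reduction the paper skips: since $k_\nu=\bb{F}_4$ has unit group of order $3$, the cube $\delta=\alpha^3$ is automatically a principal unit, so writing $\delta=1+2\gamma$ only the $16$ classes $\gamma=a+b\zeta_3$ with $a,b\in\bb{Z}/4\bb{Z}$ need checking, with the closed formulas $\tr(\delta)=2+2(2a-b)$, $N(\delta)=1+2(2a-b)+4(a^2-ab+b^2)$, and $\frob(\delta)-\delta=-2b(1+2\zeta_3)$, all of which I verified. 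Your ``a fortiori'' superset logic is sound because the $16$-case check really does pass: for instance, when $b$ is odd, $a^2-ab+b^2$ is odd, which forces exactly the pairings $\paren{\tr(\delta),N(\delta)}\in\{(4,7),(0,3)\}$ in the first list, and the parity of $b$ similarly forces the correlation between the linear coefficients $0,\,4,\,4\zeta_3\pm2$ and the constants $7,\,3,\,\{1,5\}$ in the second. What your version buys: a computation small enough to carry out by hand, and, since cubing is bijective on the $2$-group of principal units mod $8A_\nu$, your $16$ classes are exactly the values attained by cubes of units, so the enumeration is not wasteful. What the paper's version buys is only directness: it needs no reduction step, at the cost of a larger enumeration that realistically requires Sage, exactly as the paper says. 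One small caveat: your enumeration ignores the constraint $N(\alpha)=p$ with $p$ a split rational prime, which is harmless for the stated implication but means you should not conclude from your $16$ cases alone that every listed polynomial actually arises from a genuine split prime.
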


\begin{proof}
If $\nu$ is the valuation extending the 2-adic valuation, then $K_\nu$ is isomorphic to $\bb{Q}_2(\zeta_3)$.
Let $\alpha=a+b\zeta_3$ in the completion. Since $\alpha^3\wbar{\alpha}^3=(\alpha\wbar{\alpha})^3=p^3\equiv p\bmod 8A_\nu$, then 
\[
 (x-\alpha^3)(x-\wbar{\alpha}^3)\equiv x^2-(2(a^3+b^3)-3ab(a+b))x+p\bmod 8A_\nu,
\]
and
\[
  (x-\alpha^3)(x+\wbar{\alpha}^3)\equiv x^2-3ab(a-b)(2\zeta+1)x-p\bmod 8A_\nu.
\]
Sage \cite{sagemath} can be used to check all cases $\bmod\,8A_\nu$.
\end{proof}

\begin{lma}
 Let $K=\bb{Q}(\sqrt{D})$ where $D=-3,-11,-19,-43,-67,$ or $-163$. Let $p$ be a prime that splits over $K$,
 and let $p=\alpha\wbar{\alpha}$ be a factorization in which $\alpha\Cal{O}_K$ and $\wbar{\alpha}\Cal{O}_K$ are the prime ideals
 above $p$. Then $\alpha$ is a square if and only if $\alpha^3$ satisfies the polynomial $x^2-2x+1\bmod 8A_\nu$.
\end{lma}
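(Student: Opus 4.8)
The plan is to work in the completion $K_\nu\cong\bb{Q}_2(\zeta_3)$, the unramified quadratic extension of $\bb{Q}_2$, in which the conjugation $\alpha\mapsto\wbar{\alpha}$ is the nontrivial element of $\Gal(K_\nu/\bb{Q}_2)$ and sends $\zeta_3\mapsto\zeta_3^2$. Since $p=\alpha\wbar{\alpha}$ is odd we have $\nu(\alpha)=0$, so $\alpha\in A_\nu^\times$, and by \pref{eq:3} together with its cube reformulation, $\alpha$ is a square exactly when $\alpha^3\equiv 1$ or $5\bmod 8A_\nu$. Following the convention of the preceding lemma, I read the statement ``$\alpha^3$ satisfies $x^2-2x+1\bmod 8A_\nu$'' as the assertion that the characteristic polynomial $(x-\alpha^3)(x-\wbar{\alpha}^3)$ reduces to $x^2-2x+1\bmod 8A_\nu$; that is, $\alpha^3+\wbar{\alpha}^3\equiv 2$ and $\alpha^3\wbar{\alpha}^3\equiv 1\bmod 8A_\nu$.

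For the forward direction I would suppose $\alpha$ is a square, so $\alpha^3\equiv 1$ or $5\bmod 8A_\nu$. Both $1$ and $5$ lie in $\bb{Z}_2$ and are fixed by conjugation, so $\wbar{\alpha}^3\equiv\alpha^3$, giving $(x-\alpha^3)(x-\wbar{\alpha}^3)\equiv(x-1)^2$ or $(x-5)^2\bmod 8A_\nu$. A direct check, using $25\equiv 1$ and $10\equiv 2\bmod 8$, shows both reduce to $x^2-2x+1$, as required.

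For the converse I would assume the characteristic polynomial reduces to $x^2-2x+1$. Evaluating the polynomial congruence at $x=\alpha^3$ gives $(\alpha^3-1)^2\equiv(\alpha^3-\alpha^3)(\alpha^3-\wbar{\alpha}^3)=0\bmod 8A_\nu$, hence $\nu(\alpha^3-1)\geq 2$ and $\alpha^3\equiv 1\bmod 4A_\nu$. Reducing $1+4A_\nu$ modulo $8A_\nu$ leaves only the four possibilities $\alpha^3\equiv 1,\,5,\,1+4\zeta_3,\,1+4\zeta_3^2$. The trace condition $\alpha^3+\wbar{\alpha}^3\equiv 2\bmod 8A_\nu$ then eliminates the last two: for $\alpha^3\equiv 1+4\zeta_3$ one has $\wbar{\alpha}^3\equiv 1+4\zeta_3^2$ and trace $2+4(\zeta_3+\zeta_3^2)=-2\equiv 6\not\equiv 2$, and symmetrically for $1+4\zeta_3^2$. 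Thus $\alpha^3\equiv 1$ or $5\bmod 8A_\nu$ and $\alpha$ is a square.

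The point to be careful about, and the reason the trace (equivalently the condition $p\equiv 1\bmod 8$) cannot be dropped, is that the weaker reading ``$(\alpha^3-1)^2\equiv 0\bmod 8A_\nu$'' is genuinely insufficient: it holds for non-squares as well. For instance $\alpha=3+7\zeta_3$ has $N(\alpha)=37\equiv 5\bmod 8$ and $\alpha^3\equiv 1+4\zeta_3\bmod 8A_\nu$, so $(\alpha^3-1)^2=16\zeta_3^2\equiv 0$, yet $\alpha^3\not\equiv 1,5$ and $\alpha$ is not a square. Using the full characteristic polynomial excludes exactly such cases, since $\alpha=\delta^2$ in $A_\nu^\times$ forces $p=N_{K_\nu/\bb{Q}_2}(\delta)^2\equiv 1\bmod 8$. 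I expect the only real work to be assembling these elementary congruence computations cleanly; no deeper obstacle arises.
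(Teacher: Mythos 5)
Your proof is correct, but it takes a genuinely different route from the paper's. The paper's own proof consists of a single hand observation---$\alpha$ and $\wbar{\alpha}$ are either both squares or both non-squares, so $p=\alpha\wbar{\alpha}\equiv 1\bmod 8$ is a necessary condition---and then delegates everything else to a Sage enumeration mod $8A_\nu$, ``by methods similar to the previous lemma.'' You instead give a complete hand proof built on the paper's earlier criterion that $\alpha\in A_\nu^\times$ is a square if and only if $\alpha^3\equiv 1$ or $5\bmod 8A_\nu$: the forward direction works because $1$ and $5$ lie in $\bb{Z}_2$ and are fixed by conjugation, so $(x-\alpha^3)(x-\wbar{\alpha}^3)\equiv (x-1)^2$ or $(x-5)^2\equiv x^2-2x+1\bmod 8A_\nu$; the converse works by evaluating the polynomial congruence at $\alpha^3$ to get $(\alpha^3-1)^2\equiv 0\bmod 8A_\nu$, hence $\alpha^3\in 1+4A_\nu$, which leaves the four classes $1,5,1+4\zeta_3,1+4\zeta_3^2$ mod $8A_\nu$, and then using the trace coefficient to eliminate the last two (trace $\equiv -2\equiv 6\not\equiv 2$). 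I checked the details, including your counterexample: $\alpha=3+7\zeta_3$ has norm $9-21+49=37\equiv 5\bmod 8$ and $\alpha^3=-71-252\zeta_3\equiv 1+4\zeta_3\bmod 8A_\nu$, so $(\alpha^3-1)^2\equiv 0$ while $\alpha$ is not a square. Your approach buys two things the paper's does not: a human-verifiable argument, and a disambiguation of the statement itself---you correctly note that the literal reading ``$\alpha^6-2\alpha^3+1\equiv 0\bmod 8A_\nu$'' makes the lemma false, and that ``satisfies'' must be read, consistently with the preceding lemma, as the congruence of the full characteristic polynomial $(x-\alpha^3)(x-\wbar{\alpha}^3)\equiv x^2-2x+1\bmod 8A_\nu$; your closing norm argument ($\alpha=\delta^2$ forces $p=N(\delta)^2\equiv 1\bmod 8$) is exactly the paper's one hand step. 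What the paper's computer-assisted approach buys in exchange is uniformity: the same finite enumeration over $A_\nu/8A_\nu$ dispatches this lemma and the previous one without any case analysis.
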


\begin{proof}
Without a computer it should be clear that $\alpha$ and $\wbar{\alpha}$, are either both squares or both non-squares,
from which it follows that $p\equiv 1\bmod 8$ is necessary. The remaining details are best left to a computer,
e.g. Sage \cite{sagemath}, and can be computed by methods similar to the previous lemma.
\end{proof}

\begin{proof}[Proof of Theorem~\ref{thm-split}]
 Let $K=\bb{Q}(\sqrt{D})$ where $D=-3,-11,-19,-43,-67,$ or $-163$, and let $p$ be a prime that splits over $K$. 

First, consider $E:y^2=x^3+px$ and $E':y^2=x^3-4px$.
Then $p$ can factor as $p=\alpha_1\alpha_2$, with $\alpha_1,\alpha_2$ belonging to different prime ideals,
which leads to a larger number of pairs $(b_1,b_2)$ then in the case where $p$ was inert.
For clarity, let $t=\alpha_1+\alpha_2$ be the trace in the field.
The tables below summarize the conditions on which $C_{b_1,b_2}(K_\nu)\neq \emptyset$, and can be obtained by applying 
lemmas~\ref{lma2adic-zero}~and~\ref{lmapadic}. 

\begin{tabular}{l|lp{3cm}}
 $(b_1,b_2)$&$(-1,-p)$&$(\pm \alpha_1,\pm \alpha_2)$\\
 \hline
  $\nu(2)$  & $p\not\equiv 5\bmod 8$&One if $p\equiv 1\bmod 4$, both otherwise.\\
  $\nu(\alpha_1)$& $p\equiv 1\bmod 4$&$\paren{\frac{\pm t}{p}}=1$\\
  $\nu(\alpha_2)$& $p\equiv 1\bmod 4$&$\paren{\frac{\pm t}{p}}=1$
\end{tabular}

\begin{tabular}{l|lllll}
 $(b_1,b_2)$&$(-1,4p)$&$(2,-2p)$&($-2,2p)$&$(\pm \alpha_1,\mp 4\alpha_2)$&$(\pm 2\alpha_1,\mp2\alpha_2)$\\
 \hline
  $\nu(2)$&$p\equiv 1\bmod 4$&$p\equiv 1,7\bmod 8$&$p\equiv 1,3\bmod 8$& Both if $p\equiv 1\bmod 4$& Both if $p\equiv 1\bmod 8$\\
  $\nu(\alpha_1)$&$p\equiv 1\bmod 4$&$p\equiv 1,7\bmod 8$&$p\equiv 1,3\bmod 8$&$\paren{\frac{\mp t}{p}}=1$&$\paren{\frac{\mp 2t}{p}}=1$\\
  $\nu(\alpha_2)$&$p\equiv 1\bmod 4$&$p\equiv 1,7\bmod 8$&$p\equiv 1,3\bmod 8$&$\paren{\frac{\pm t}{p}}=1$&$\paren{\frac{\pm 2t}{p}}=1$
\end{tabular}

 From these tables, we see that $S^{(\phi)}(E/K)$ always contains $-p$, contains $-1$ if and only if $p\equiv 1\bmod 4$, 
 contains $2$ or $-2$ if $p\not\equiv 5\bmod 8$,
 and contains $\alpha_1$ and $\alpha_2$ if $p\equiv 1\bmod 4$ and $\paren{\frac{t}{p}}=1$. As for $S^{(\hat\phi)}(E'/K)$, we never have $\pm 2$, but $-1$
 occurs when $p\equiv 1\bmod 8$, and $p$ always occurs. As for $\alpha_1$ and $\alpha_2$, the situation can be broken into two cases. If $p\equiv 1\bmod 4$,
 then we have both $\alpha_1$ and $\alpha_2$ or both $-\alpha_1$ and $-\alpha_2$, but not the other pair, if and only if $\paren{\frac{t}{p}}=1$.
 On the other hand if $p\equiv 3\bmod 4$, then $S^{(\hat\phi)}(E'/K)$ both $\alpha_1$ and $\alpha_2$ or both $-\alpha_1$ and $-\alpha_2$, but not the other pair,
 depending on which pair satisfies $\paren{\frac{t}{p}}=-1$.

Similarly for $E:y^2=x^3-px$ and $E':y^2=x^3+4px$ we obtain the following tables

\begin{tabular}{l|lll}
 $(b_1,b_2)$& $(1,-p)$ &$(-1,p)$&$(\pm \alpha_1,\mp \alpha_2)$\\
 \hline
  $\nu(2)$  & True &$p\not\equiv 3\bmod 8$& False\\
  $\nu(\alpha_1)$& True &$p\equiv 1\bmod 4$&$\paren{\frac{\mp t}{p}}=1$\\
  $\nu(\alpha_2)$& True &$p\equiv 1\bmod 4$&$\paren{\frac{\pm t}{p}}=1$
\end{tabular}

\begin{tabular}{l|lllp{3cm}p{3cm}}
 $(b_1,b_2)$&$(-1,-4p)$&$(2,2p)$&($-2,-2p)$&$(\pm \alpha_1,\pm 4\alpha_2)$&$(\pm 2\alpha_1,\pm 2\alpha_2)$\\
 \hline
  $\nu(2)$&$p\equiv 3\bmod 4$&$p\equiv 1,7\bmod 8$&$p\equiv 5,7\bmod 8$& One if $p\equiv 1\bmod 4$ & One if $p\equiv 1\bmod 4$, both if $p\equiv 3\bmod 8$\\
  $\nu(\alpha_1)$&$p\equiv 1\bmod 4$&$p\equiv 1,7\bmod 8$&$p\equiv 1,3\bmod 8$&$\paren{\frac{\pm t}{p}}=1$&$\paren{\frac{\pm 2t}{p}}=1$\\
  $\nu(\alpha_2)$&$p\equiv 1\bmod 4$&$p\equiv 1,7\bmod 8$&$p\equiv 1,3\bmod 8$&$\paren{\frac{\pm t}{p}}=1$&$\paren{\frac{\pm 2t}{p}}=1$
\end{tabular}

 From these tables, we see that $S^{(\phi)}(E/K)$, always contains $p$,
 never contains $-1$ or $-2$, but it contains $2$ if $p\equiv 1$ or 7 $\bmod 8$. The situation with
 $\alpha_1$ and $\alpha_2$ is complicated. If $p\equiv 7\bmod 8$, then none of $\pm \alpha_1,\pm \alpha_2,\pm 2\alpha_1,\pm 2\alpha_2$ are in $S^{(\phi)}(E/K)$.
 If $p\equiv 3\bmod 8$, then 
 \[
  \paren{\frac{2t}{p}}=1\iff \paren{\frac{t}{p}}=-1\quad\text{and}\quad \paren{\frac{-2t}{p}}=1\iff \paren{\frac{t}{p}}=1
 \]
so either both of $2\alpha_1$ and $2\alpha_2$ or both of $-2\alpha_1$ and $-2\alpha_2$ are in $S^{(\phi)}(E/K)$ but not the other pair. If $p\equiv 5\bmod 8$, then 
then both of $2\alpha_1$ and $2\alpha_2$ or  both of $-2\alpha_1$ and $-2\alpha_2$ are in $S^{(\phi)}(E/K)$ if and only if $\paren{\frac{t}{p}}=-1$, but not the other pair,
on the other hand from the column for $(\pm \alpha_1,\pm 4\alpha_2)$, we see that both of $\alpha_1$ and $\alpha_2$ or both of $-\alpha_1$ and $-\alpha_2$ are in
$S^{(\phi)}(E/K)$ if and only if $\paren{\frac{t}{p}}=1$. Thus $p\equiv 3\text{ or }5\bmod 8$, the dimension does not depend on $t$, even though
the particular elements in $S^{(\phi)}(E/K)$ do depend on $t$. If $p\equiv 1\bmod 8$, then $\alpha_1$ and $\alpha_2$ or $-\alpha_1$ and $-\alpha_2$ are generators 
if and only if $\paren{\frac{t}{p}}=1$.

As for $S^{(\hat\phi)}(E/K)$, $-1$ is a generator if and only if $p\equiv 1\bmod 4$, but $2$ and $-2$ never are since $p$ is odd.
Finally, $-p$ is always in $S^{(\hat\phi)}(E/K)$, and  $\alpha_1$ and $\alpha_2$ are in $S^{(\hat\phi)}(E'/K)$ if and only if $p\equiv 1\bmod 8$ and $\paren{\frac{t}{p}}=1$.
\end{proof}
 
\begin{rmk}
 The use of $\paren{\frac{t}{p}}=1$ is allowed because if $\chi_{\alpha_1}$ and $\chi_{\alpha_2}$ are 
 the characters of $\Cal{O}_K/(\alpha_1)$ and $\Cal{O}_K/(\alpha_2)$ respectively, extended by zero
 in the usual way, then 
\[
 \chi_{\alpha_1}(\alpha_2)=\chi_{\alpha_1}(t)\quad\text{and}\quad \chi_{\alpha_2}(\alpha_1)=\chi_{\alpha_2}(t).
\]
Then by the isomorphism $\bb{Z}/(p)\to\Cal{O}_K/(q_i)$ defined by $r+(p)\mapsto r+(q_i)$, it follows that
\[
 \paren{\frac{t}{p}}=\chi_{\alpha_1}(t)=\chi_{\alpha_2}(t).
\]
 \end{rmk}
 
 \begin{proof}[Proof of Theorem~\ref{thmCongNum}]
Let $n\in\bb{Z}^+$ be square free and divisible only by primes that are inert in $K$. Consider the curves
\[
 E_n:y^2=x^3-n^2x\quad\text{and}\quad E'_n: x^3+4n^2x
\]
The first has full two torsion over $K$, but the second does not. In particular we note that 
the points $(\pm n,0)$ on $E$ map to $4n^2$ and $\pm n$ in $K(S,2)$, thus $-1$ is always
in $S^{\hat\phi}(E'/K)$ but does not contribute to the rank.

The even and odd cases are dealt with separately. For each case we obtain a table
indicating when $C_{b_1,b_2}(K_\nu)\neq \emptyset$ by applying lemmas~\ref{lmapadic}~and~\ref{lma2adic-zero}.

Case 1: $n\equiv 1\bmod2$. For each $i$, let $n=n_ip_i$.
\smallskip

\begin{tabular}{l|llllll}
 $(b_1,b_2)$&$(\pm p_i,\mp p_in_i^2)$&$(p_i,4p_in_i^2)$&$(-p_i,-4p_in_i^2)$&$(2p_i,2p_in_i^2)$&$(-2p_i,-2p_in_i^2)$\\
 \hline
  $\nu|2$& True&$p_i\equiv 1\bmod 4$&$p_i\equiv 3\bmod 4$&$p_i\equiv 1\bmod 4$&$p_i\equiv 3\bmod 4$\\
  $\nu|p$& True&True& True& True& True
\end{tabular}
\smallskip

From this table we see that $S^{(\phi)}(E/K)$ contains 2, but not $-1$ so it contains exactly one of $\pm p_i$ for each $i$.
Since $n$ is odd, then $2$ is not in $S^{(\hat\phi)}(E/K)$ as we saw at the beginning of the section, but $-1$ is and
$\pm p_i$ is for all $i$.

Case 2: $n\equiv 0\bmod2$. Let $n=2m$ and for each $i$ let $m=m_ip_i$.
\smallskip

\begin{tabular}{l|lllll}
 $(b_1,b_2)$&$(\pm p_i,\mp 4p_im_i^2)$&$(\pm 2p_i,\mp 2p_im_i^2)$&$(p_i, 16p_im_i^2)$&$(-p_i,-16p_im_i^2)$&$(\pm2,\pm8m^2)$\\
 \hline
  $\nu|2$&True&True & True& True& False\\
  $\nu|p$&True&True & True& True& True
\end{tabular}
\smallskip

From this table we see that $S^{(\phi)}(E/K)$ does not contain 2, but it contains $-1$ and both of $\pm p_i$ for all $i$;
meanwhile $S^{(\hat\phi)}(E/K)$ contains everything. Unlike in the previous theorem, we have 
$\dim_{\bb{F}_2}\frac{E'(K)[\hat\phi]}{\phi(E(K)[2])}=0$. Note also that $2$ is counted among the prime factors of $n$
in the even case.
\end{proof}




\bibliography{MaximalRank-Kretschmer}{}
\bibliographystyle{plain}

\end{document}